\newtheorem{thm}{Theorem}[section]
\newtheorem{lem}[thm]{Lemma}
\newtheorem{prop}[thm]{Proposition}
\newcounter{claimcounter}[thm]
\numberwithin{claimcounter}{thm}
\newtheorem{claim}[claimcounter]{Claim}
\theoremstyle{definition}
\newtheorem{defin}[thm]{Definition}
\numberwithin{equation}{section}
\newcommand{\N}{\mathbb{N}}
\newcommand{\R}{\mathbb{R}}
\renewcommand{\omega}{\N}
\DeclareMathOperator{\supp}{supp}
\DeclareMathOperator{\CS}{CS}
\DeclareMathOperator{\osc}{osc}
\DeclareMathOperator{\diam}{diam}
\newcommand{\alaligne}{~\vspace*{\topsep}\nobreak\@afterheading}
\begin{document}

\title{Oscillation stability by the Carlson--Simpson theorem}

\author[T. Bice]{Tristan Bice}
\author[N. de Rancourt]{No\'e de Rancourt}
\author[J. Hubi\v{c}ka]{Jan Hubi\v{c}ka}
\author[M. Kone\v{c}n\'y]{Mat\v{e}j Kone\v{c}n\'{y}}

\address[T. Bice]{Institute of Mathematics of the Czech Academy of Sciences, \v{Z}itn\'a 609/25,
11000 Praha 1
Czech Republic}
\email{bice@math.cas.cz}

\address[N.~de Rancourt]{Universit\'e de Lille, CNRS, UMR 8524 – Laboratoire Paul Painlev\'e, F-59000 Lille, France}
\email{nderancour@univ-lille.fr}

\address[J. Hubi\v{c}ka]{Department of Applied Mathematics (KAM), Charles University, Malostransk\'e n\'am\v{e}st\'\i{}
 25, Praha 1, Czech Republic}
\email{hubicka@kam.mff.cuni.cz}

\address[M. Kone\v{c}n\'y]{Institute of Algebra, TU Dresden, Helmholtzstra{\ss}e 10, 01069 Dresden, Germany \and Department of Applied Mathematics (KAM), Charles University, Malostransk\'e n\'am\v{e}st\'\i{}
 25, Praha 1, Czech Republic}
\email{matej.konecny@tu-dresden.de}

\subjclass[2020]{Primary: 51F30. Secondary: 03E02, 05C55, 05D10, 46B99, 46T99.}

\keywords{Oscillation stability, $\ell_\infty$, Urysohn sphere, Ramsey theory, Carlson--Simpson theorem}

\thanks{N. de Rancourt acknowledges support from the Labex CEMPI (ANR-11-LABX-0007-01).}

\thanks{Research of Tristan Bice was supported by GA\v{C}R project 22-07833K and RVO: 67985840.}
\thanks{In the earlier stages of this project, research of Jan Hubi\v{c}ka and Mat\v{e}j Kone\v{c}n\'{y} was supported by the project 21-10775S of
the Czech Science Foundation (GA\v{C}R). In the later stages, Jan Hubi\v{c}ka was supported by a project that
has received funding from the European Research Council under the European Union’s Horizon
2020 research and innovation programme (grant agreement No 810115), and Mat\v{e}j Kone\v{c}n\'{y} was supported
by a project that has received funding from the European Union (Project POCOCOP, ERC
Synergy Grant 101071674). Views and opinions expressed are however those of the authors only
and do not necessarily reflect those of the European Union or the European Research Council
Executive Agency. Neither the European Union nor the granting authority can be held responsible
for them.}

\begin{abstract}
We prove oscillation stability for the Banach space $\ell_\infty$: every weak-* Borel, uniformily continuous map from the unit sphere of this space to a compact metric space can be made arbitrarily close to a constant map when restricted to the unit sphere of a suitable linear isometric subcopy of $\ell_\infty$. We also give a new proof of oscillation stability for the Urysohn sphere (a result by Nguyen Van Th\'e--Sauer): every uniformily continuous map from the Urysohn sphere to a compact metric space can be made arbitrarily close to a constant map when restricted to a suitable isometric subcopy of the Urysohn sphere. Both proofs are based on Carlson--Simpson's dual Ramsey theorem.
\end{abstract}

\maketitle

\section{Introduction}

The \textit{oscillation} of a map $f \colon X \to M$, where $M$ is a metric space, is defined as $\osc(f) = \sup_{x, y \in X} d(f(x), f(y))$. If $X$ is a Banach space, denote by $S_X$ its unit sphere, i.e. the set of its norm $1$ vectors. 
Say that a map $f \colon S_X \to M$, where $M$ is a metric space, \textit{stabilizes}, if for every $\varepsilon > 0$, there exists a vector subspace $Y \subseteq X$ linearly isometric to $X$ such that $\osc(f\restriction_{S_Y}) \leqslant \varepsilon$. It is classical result by James \cite{JamesDistortion} that when $X$ is the Banach space $c_0$ or $\ell_1$, every equivalent norm on $X$ stabilizes. On the other hand, Odell--Schlumprecht \cite{OdellSchlumprecht} solved in 1994 the longstanding \textit{distortion problem} by showing that for each $p \in (1, \infty)$, there is an equivalent norm on the Banach space $\ell_p$ that does not stabilize ; we say that $\ell_p$ is \textit{distortable}.

\smallskip

Closely related to the question of distortion is this of \textit{oscillation stability}, concerned with the stabilization of not only norms, but more generally arbitrary uniformily continuous functions. It follows from Odell--Schlumprecht's solution of the distortion problem that for each $p \in [1, \infty)$, there exists a Lipschitz function $f \colon S_{\ell_p} \to [0, 1]$ that does not stabilize (we refer the reader to Odell--Schlumprecht's survey \cite{OdellSchlumprechtSurvey} for more details on the distortion problem and its links with oscillation stability). On the opposite, it was shown by Gowers \cite{GowersLipschitz} that every uniformily continuous map $f \colon S_{c_0} \to K$, where $K$ is a compact metric space, stabilizes. It is worth noticing that the latter result has a Ramsey-theoretic flavour: the map $f$ can be seen as a colouring of $S_{c_0}$ with compactly many colours, and the result asserts the existence of a nearly monochromatic subcopy of $S_{c_0}$. This remark is not anecdotal, as the proof of Gowers' theorem is entirely combinatorial.

\smallskip

The first aim of this paper is to prove the following oscillation stability result for the Banach space $\ell_\infty$.

\begin{thm}\label{thm:OscStabEllInf}
Every weak-* Borel, uniformily continuous map $f \colon S_{\ell_\infty} \to K$, where $K$ is a compact metric space, stabilizes.
\end{thm}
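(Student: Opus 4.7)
My plan is to reduce the statement to a finite Borel coloring problem on a space of sequences over a finite alphabet, and then invoke the Carlson--Simpson dual Ramsey theorem to extract a linear isometric subcopy of $\ell_\infty$ on which the coloring is constant. Fix $\varepsilon > 0$ and choose $\delta > 0$ via uniform continuity so that $\|x - y\|_\infty \leq 2\delta$ implies $d_K(f(x), f(y)) \leq \varepsilon/4$. Fix a finite $\delta$-net $V \subseteq [-1, 1]$ containing $\pm 1$ but avoiding $0$ (so that $V$-valued sequences are never zero), and a finite $\varepsilon/4$-net $N \subseteq K$. Since $f$ is defined only on $S_{\ell_\infty}$, extend it to the non-zero part of the ball by radial projection, $\hat f(x) := f(x/\|x\|_\infty)$, which remains weak-$*$ Borel. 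Define $c \colon V^\omega \to N$ by letting $c(x)$ be a nearest point in $N$ to $\hat f(x)$. On bounded subsets of $\ell_\infty$ the weak-$*$ topology coincides with the product topology, so $V^\omega$ is a compact Polish subspace and $c$ is Borel.

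Identifying each $x \in V^\omega$ with its $V$-labeled level-set partition of $\omega$, invoke the Carlson--Simpson dual Ramsey theorem in its labeled-partition form to obtain a partition $\pi$ of $\omega$ into infinitely many finite classes such that $c$ is constant, equal to some $k_* \in N$, on every $V$-valued sequence that is constant on each $\pi$-class. Let $Y := \{y \in \ell_\infty : y \text{ is constant on each } \pi\text{-class}\}$; enumerating the classes yields a linear isometry $Y \cong \ell_\infty$. Given $y \in S_Y$, round the value on each $\pi$-class to the nearest element of $V$ to obtain $\tilde y \in V^\omega \cap Y$ with $\|y - \tilde y\|_\infty \leq \delta$; then $c(\tilde y) = k_*$, and a short calculation using the radial rescaling $\tilde y / \|\tilde y\|_\infty \in S_{\ell_\infty}$ and uniform continuity yields $d_K(f(y), k_*) \leq \varepsilon/2$. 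The triangle inequality applied to any pair $y, y' \in S_Y$ then gives $\osc(f\restriction_{S_Y}) \leq \varepsilon$.

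The main difficulty will be to invoke Carlson--Simpson in a form that produces a genuinely \emph{linear} isometric subcopy of $\ell_\infty$, not merely an affine or combinatorial one. The classical infinite-dimensional Carlson--Simpson theorem delivers monochromatic combinatorial subspaces parametrized by variable words that may carry fixed letters, and such subspaces correspond only to affine substructures of $\ell_\infty$. I expect the fix to come either from a direct appeal to the dual Ramsey statement on labeled partitions of $\omega$, or from an iteration over the possible image sizes $k \leq |V|$ that absorbs any fixed-letter contributions; either way, the bookkeeping needed to simultaneously control all subalphabets $V' \subseteq V$ while preserving Borel measurability on $V^\omega$ is where most of the technical work of the proof will sit.
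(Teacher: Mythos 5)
There is a genuine gap, and it sits exactly where you predicted: the Ramsey step. The ``labeled-partition form'' of Carlson--Simpson that your argument needs --- a partition $\pi$ of $\omega$ into infinitely many classes such that $c$ is constant on \emph{all} $V$-valued sequences that are constant on each class --- is false, even for clopen colourings: colour $x \in V^\omega$ according to whether $x(0) = 1$; for every $\pi$, the family of $\pi$-constant sequences contains members with $x(0)=1$ and members with $x(0)\neq 1$. The correct infinite-dimensional word versions of Carlson--Simpson only produce a monochromatic family generated by a fixed finite prefix and variable words that may carry fixed letters from $V$, and since you removed $0$ from $V$ these fixed coordinates take nonzero values over which the theorem gives you no control. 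Your argument requires $S_Y$ to lie in a $\delta$-fattening of the monochromatic family; but if some coordinate $j$ is frozen at the value $v$ on the whole family, applying this to $y$ and $-y$ in $S_Y$ forces $|v| \leqslant \delta$, which nothing guarantees. So neither of your proposed repairs closes the gap: the labeled-partition statement is false, and iterating over subalphabets does not remove the fixed letters --- this is precisely the affine-versus-linear obstruction you flagged, and it is the whole content of the problem rather than bookkeeping.

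The paper's proof avoids colouring words altogether. For each $k$ it builds a single vector $x_k \in S_{\ell_\infty}$ (a symmetric staircase running through the values $0, \pm\tfrac{1}{k}, \ldots, \pm 1$ and back down) and a copy $X_k \in {\ell_\infty \choose \ell_\infty}$ with $S_{X_k} \subseteq (x_k \circ \CS)_{2/k}$ (Proposition \ref{prop:MainLInf}): after rounding to multiples of $\tfrac{1}{k}$, any normalized non-negatively weighted sum of widely spaced shifts of $x_k$ exhibits the values of $x_k$ in exactly the order of first appearance required of a rigid surjection, hence equals $x_k \circ p$ for some $p \in \CS$ (Lemma \ref{lem:PropertyOfxk}); negative coefficients are absorbed by shifting one extra position, using $\|x_k + S(x_k)\|_\infty = \tfrac{1}{k}$. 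One then colours $\CS$ itself by $p \mapsto$ (colour of $f(x_k \circ p)$), applies the Carlson--Simpson theorem exactly as stated for partitions into infinitely many unlabeled pieces, and transfers monochromaticity on $\CS \circ p$ to $S_{X_k \circ p}$, which is a genuinely linear isometric copy because right composition with $p$ is a linear isometry. In short, the missing idea is to encode the finite set of ``values'' into a single $\CS$-orbit so that no labels, and hence no fixed letters, ever enter the Ramsey statement; without something of this kind your reduction does not go through.
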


\noindent As for Gowers' $c_0$ theorem, our proof of Theorem \ref{thm:OscStabEllInf} is combinatorial; namely, it is based on Carlson--Simpson's dual Ramsey theorem \cite{CarlsonSimpson}. In the statement of Theorem \ref{thm:OscStabEllInf}, the word \textit{weak-*} refers to the weak-* topology one gets when seeing $\ell_\infty$ as the dual of $\ell_1$; it coincides with the topology on $S_{\ell_\infty}$ induced by the product topology on $\R^\N$. The assumption that $f$ is weak-* Borel cannot be removed from the statement of Theorem \ref{thm:OscStabEllInf}, as will be shown in Proposition \ref{prop:diagonal} (it can be weakened to, e.g., $f$ being weak-* Souslin-measurable, but we decided to restrict our statement to Borel measurability in order to keep this paper accessible to a large audience). It is a definability assumption, made in order to avoid pathological functions $f$, constructed for instance with the help of the axiom of choice. The requirement of such assumptions is classical in infinite-dimensional Ramsey theory; see \cite{TodorcevicOrange} for more details. Note that, unlike for $c_0$, $f$ being uniformily continuous cannot be considered as a definability assumption since $\ell_\infty$ is nonseparable.

\smallskip

Towards a better understanding of the combinatorial structure of the Hilbert space $\ell_2$, and in the hope for a purely combinatorial solution to the distortion problem (Odell--Schlumprecht's argument relying on advanced Banach space theoretic techniques), oscillation stability has also been studied in the case of the \textit{Urysohn sphere} $\mathcal{S}$, a metric space that can be seen as a good combinatorial analogue of the unit sphere of $\ell_2$. The Urysohn sphere having no linear structure, definitions need to be adapted. Say that a map $f\colon X \to M$ between two metric spaces \textit{stabilizes} if for every $\varepsilon > 0$, there exists a subset $Y \subseteq X$ isometric to $X$ such that $\osc(f\restriction_Y) \leqslant \varepsilon$. The following oscillation-stability result has been proved by Nguyen Van Thé--Sauer \cite{NVTSauer}, building on earlier results by Lopez-Abad--Nguyen Van Thé \cite{LopezAbadNVT}.

\begin{thm}[Nguyen Van Thé--Sauer 2009]\label{thm:NVTS}
    Every uniformily continuous map $f \colon \mathcal{S} \to K$, where $K$ is a compact metric space, stabilizes.
\end{thm}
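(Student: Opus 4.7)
The plan is to reduce Theorem~\ref{thm:NVTS} to an exact indivisibility statement for a discretised version of $\mathcal{S}$, namely a rational Urysohn space, and then to prove that indivisibility by direct application of the Carlson--Simpson theorem, in parallel with the strategy used for Theorem~\ref{thm:OscStabEllInf}.

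First I would use the uniform continuity of $f$ to discretise. Given $\varepsilon > 0$, pick $\delta > 0$ such that $d_\mathcal{S}(x,y) < \delta$ implies $d_K(f(x),f(y)) < \varepsilon/3$, and partition $K$ into finitely many sets of diameter $< \varepsilon/3$; this induces a finite colouring $c \colon \mathcal{S} \to \{1,\dots,r\}$. It then suffices to find an isometric copy $\mathcal{S}' \subseteq \mathcal{S}$ of $\mathcal{S}$ lying in the $\delta$-neighbourhood of a single colour class, since then $\osc(f\restriction_{\mathcal{S}'}) \leqslant \varepsilon$. Next, I would fix a finite $D \subseteq (0,1]\cap\mathbb{Q}$ which is a $\delta/2$-net of $(0,1]$ and satisfies the four-values condition, so that the class of finite $D$-valued metric spaces has a Fra\"iss\'e limit $\mathcal{U}_D$. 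This $\mathcal{U}_D$ embeds $\delta$-densely in $\mathcal{S}$, and any isometric copy of $\mathcal{U}_D$ in $\mathcal{S}$ extends, via a Katětov completion, to an isometric copy of $\mathcal{S}$ inside its $\delta$-neighbourhood. Pulling back $c$, the problem reduces to showing that $\mathcal{U}_D$ is \emph{indivisible}: for every finite colouring of $\mathcal{U}_D$, there is a monochromatic isometric subcopy of $\mathcal{U}_D$.

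To prove indivisibility, I would represent elements of $\mathcal{U}_D$ combinatorially by finite words over a suitable countable alphabet encoding one-point Katětov extensions compatible with $D$, arranged so that the $D$-valued distance between two elements is computed from their words by a simple rule. A finite colouring of $\mathcal{U}_D$ then pulls back to a finite colouring of this word space, and Carlson--Simpson yields an infinite sequence of variable words all of whose substitutions have the same colour. The final check is that the substructure of $\mathcal{U}_D$ spanned by the substitutions is itself isometric to $\mathcal{U}_D$: this amounts to verifying that every finite $D$-valued extension of a finite configuration can be realised by a substitution in the Carlson--Simpson subspace, and should follow from the extension property of $\mathcal{U}_D$ together with the flexibility of the encoding.

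The main obstacle will be designing this coding. It must be rigid enough that substitutions respect distances, so that the combinatorial action of Carlson--Simpson on variable words translates into an isometric action on $\mathcal{U}_D$; at the same time, it must be flexible enough that every amalgam in the Fra\"iss\'e class generating $\mathcal{U}_D$ is realisable as a substitution inside the infinite combinatorial subspace returned by the theorem. Marrying the Fra\"iss\'e extension property of $\mathcal{U}_D$ with the substitution semantics of Carlson--Simpson is where most of the technical work will lie; once the encoding is in place, the Ramsey-theoretic content of the theorem collapses to a single invocation of Carlson--Simpson, bypassing the tree-Ramsey machinery used in the original proofs of \cite{NVTSauer,LopezAbadNVT}.
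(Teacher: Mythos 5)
Your first half (discretise $f$, then try to pass to a discrete Urysohn space with a finite distance set) is essentially the Lopez-Abad--Nguyen Van Th\'e reduction from \cite{LopezAbadNVT}: oscillation stability of $\mathcal{S}$ follows from \emph{exact} indivisibility of the Urysohn spaces with finitely many distances, via the fact that a copy of $\mathcal{S}$ can be found in a small fattening of any copy of such a space inside $\mathcal{S}$. That part is correct in outline (though the fattening lemma you assert for an arbitrary four-values net $D$ is itself a nontrivial result that you would have to prove, not a formality). The genuine gap is in the second half: you have reduced the theorem to exact indivisibility of $\mathcal{U}_D$, which is precisely the hard combinatorial content of Nguyen Van Th\'e--Sauer \cite{NVTSauer}, and your plan to obtain it ``by a single invocation of Carlson--Simpson'' is not justified. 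The Carlson--Simpson theorem gives you constancy of the colouring on a set of the form $\CS \circ p$, i.e.\ on the substitutions of one fixed pattern; under any word/sequence coding, the points of $\mathcal{U}_D$ represented by such substitutions form a very constrained family (each point is realised only in special ``shapes''), and there is no reason it contains an \emph{exact} isometric copy of $\mathcal{U}_D$ realising all one-point extensions. This is exactly why codings of this kind naturally yield finite big Ramsey degree bounds, or approximate copies, rather than indivisibility; closing that gap is where the entire difficulty of the Sauer--Nguyen Van Th\'e argument lives, and your proposal acknowledges it but does not supply the missing idea.

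It is worth contrasting this with the route the paper takes, which shows that the detour through exact indivisibility is unnecessary. The paper builds a concrete pseudometric sequence space $\mathcal{U} \subseteq [0,1]^\omega$ (with the metric defined through the quantities $m(x,y,n)$ and $M(x,y,n)$) on which $\CS$ acts by isometries and such that $p \mapsto w_r \circ p$ is continuous; it colours only the single orbit $w_r \circ \CS$, applies Carlson--Simpson once to make $f$ nearly constant on $w_r \circ \CS \circ p$, and then uses the key geometric fact that the $\frac{1}{2r}$-fattening $\mathcal{U}_r$ of that one orbit is already $\mathfrak{S}_1$-universal, hence contains a copy of $\mathcal{S}$. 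Uniform continuity of $f$ then converts near-constancy on the orbit into small oscillation on the fattening. In other words, the paper works with approximate copies throughout, which is all that oscillation stability requires, and thereby avoids the exact-indivisibility statement your plan hinges on. If you want to salvage your approach you would either have to prove indivisibility of $\mathcal{U}_D$ (a substantial theorem in its own right, not a corollary of one application of Carlson--Simpson), or weaken your target to an approximate statement---at which point you are naturally led back to an argument of the paper's type.
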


\noindent Nguyen Van Thé--Sauer's original proof is combinatorial and done by hand; our second aim in this paper is to give a shorter proof of this result, also based on the Carlson--Simpson theorem.

\smallskip

The present paper is part of a wider ongoing project aiming at defining and understanding a metric version of the notion of \textit{big Ramsey degrees}. The theory of big Ramsey degrees studies analogues of the infinite Ramsey theorem in sets endowed with structure, such as ordered sets, graphs or discrete metric spaces, and our general goal is to make this theory compatible with the study of continuous metric structures such as metric spaces or Banach spaces. Theorems \ref{thm:OscStabEllInf} and \ref{thm:NVTS} are particular cases of metric big Ramsey degree results for the Banach space $\ell_\infty$ and the Urysohn sphere, to appear in a forthcoming paper by the same authors; part of those results have been announced in \cite{MetricBRDEurocomb}, where our setting for metric big Ramsey degrees is also presented.  The proof methods in use in the present paper are heavily inspired by recent methods for computing big Ramsey degrees based on the Carlson--Simpson theorem, first developed by Hubi\v{c}ka \cite{HubickaCarlsonSimpson} and then expanded by Balko--Chodounsk\'y--Hubi\v{c}ka--Kone\v{c}n\'y--Ne\v{s}et\v{r}il--Vena \cite{BRDForbiddingCycles} and Balko--Chodounsk\'y--Dobrinen--Hubi\v{c}ka--Kone\v{c}n\'y--Vena--Zucker \cite{BRDPartialOrder}.

\smallskip

The present paper is organized as follows. In Section \ref{sec:CarlsonSimpson}, we recall and explain the Carlson--Simpson theorem. In Section \ref{sec:EllInf}, we prove Theorem \ref{thm:OscStabEllInf} and show that the \mbox{weak-*} Borelness assumption cannot be removed. Finally, in Section \ref{sec:Urysohn}, we present our proof of Theorem \ref{thm:NVTS}.

\bigskip

\section{The Carlson--Simpson theorem}\label{sec:CarlsonSimpson}

In this paper, $\N$ will denote the set $\{0, 1, 2, \ldots\}$. Given an integer $k \in \N$, we will follow the usual set-theoretic convention and identify $k$ to the set $\{0, \ldots, k-1\}$ of its predecessors.

\smallskip

\textit{Ramsey theory} is a collection of rather heterogeneous results sharing a similar spirit: for every reasonable colouring of a sufficiently large structure, one should be able to find a large monochromatic substructure. For instance, the \textit{infinite Ramsey theorem}, which gave its name to the theory, asserts that for every integer $d \geqslant 1$ and every colouring of $[\N]^d$ with a finite number of colours, there should exist an infinite subset $M \subseteq \N$ such that $[M]^d$ is monochromatic (here, $[M]^d$ denotes the set of all $d$-elements subsets of the set $M$). 

\smallskip

While in the infinite Ramsey theorem, one colours subsets, or equivalently injections, the Carlson--Simpson theorem deals with colourings of partitions, or equivalently surjections. This is why it is often refered to as a \textit{dual Ramsey theorem}. Denote by $(\N)^\infty$ the set of all partitions of $\N$ into infinitely many pieces (the pieces are required to be nonempty). Given $P \in (\N)^\infty$, denote by $(P)^\infty$ the set of all $Q \in (\N)^\infty$ that are \textit{coarser than $P$}, i.e. that can be obtained by merging together pieces of $P$. The Carlson--Simpson's theorem essentially says that for every definable enough colouring of $(\N)^\infty$ into finitely many colours, there exists $P \in (\N)^\infty$ such that $(P)^\infty$ is monochromatic. In order to give a more formal statement of this result and in particular of its definability assumption, we will first give a canonical representation of elements of $(\N)^\infty$ as surjections. Recall that if $L$ is a linearly ordered set, an \textit{initial segment} of $L$ is any subset $A \subseteq L$ satisfying the following property: for every $x \in A$ and every $y \in L$ such that $y \leqslant x$, one should have $y \in A$.

\begin{defin}
    Let $K, L$ be linearly ordered sets. A surjection $L \to K$ is said to be \textit{rigid} if for every initial segment $A$ of $L$, $f(A)$ is an initial segment of $K$.
\end{defin}

\noindent In practice, the only linearly ordered sets we will consider in the present paper will be either finite or equal to $\N$.  In those cases, it is easy to see that a surjection $f \colon K \to L$ is rigid iff for every $x, y \in L$ with $x < y$, one has $\min f^{-1}(\{x\}) < \min f^{-1}(\{y\})$.

\smallskip

Denote by $\CS$ (like \textit{Carlson--Simpson}) the set of all rigid surjections $\N \to \N$. This is a monoid for composition. We endow it with the topology induced by the product topology on $\N^\N$; it follows from standard results that with this topology, $\CS$ is a Polish space (although this fact won't be needed in this paper). To each $f \in \CS$, we can associate the partition $\{f^{-1}(\{n\}) \mid n \in \N\} \in (\N)^\infty$; it is easy to see that this correspondence is a bijection between $\CS$ and $(\N)^\infty$. Moreover, given $p, q \in \CS$ and their respective associated partitions $P$ and $Q$, it is easy to check that $Q$ is coarser than $P$ iff there exists $r \in \CS$ with $q = r \circ p$. Hence the Carlson--Simpson theorem can be phrased as follows.

\begin{thm}[Carlson--Simpson 1984]
    For every $k \in \N$ and every Borel colouring $c \colon \CS \to k$, there exists $p \in \CS$ such that $c$ is constant on $\CS \circ p$.
\end{thm}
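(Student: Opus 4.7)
My plan is to prove this following the standard paradigm for infinite Ramsey-type theorems, dividing the argument into three layers: a finite combinatorial core, a lift to open (equivalently, clopen) colourings, and a topological extension to all Borel colourings. Since $\CS \circ p$ is the $\CS$-orbit under left multiplication by an element of $\CS$ representing a coarsening, stabilizing $c$ on $\CS \circ p$ means finding a single $p$ whose every "coarser refinement" has the same colour.

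For the finite core, I would rely on the \emph{finite dual Ramsey theorem} (Graham--Rothschild): for every $k, m, r \in \N$, there exists $n \in \N$ such that for every $r$-colouring of the rigid surjections $n \to m$, there is a rigid surjection $f \colon n \to \ell$ (with $\ell \geqslant m$) such that all rigid surjections of the form $g \circ f$, $g \colon \ell \to m$ rigid, receive the same colour. This plays the role of the Hales--Jewett pigeonhole in our setting and it is exactly what is needed to locally stabilize the colour $c(q \circ p)$ when only finitely many coordinates of $q$ and $p$ are relevant.

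The core of the argument is then a \emph{fusion} producing the desired infinite $p \in \CS$ from the finite theorem. I would construct $p$ coordinate by coordinate: at stage $n$, the values $p(0), \ldots, p(N_n-1)$ are fixed and I use Graham--Rothschild to choose $p(N_n), \ldots, p(N_{n+1}-1)$ so that $c(q \circ p)$ is stabilized for every $q \in \CS$ whose behaviour on $\{0, \ldots, n\}$ is determined (since for such $q$, the value $q \circ p$ only depends on the already-chosen initial segment of $p$). Iterating and diagonalizing, the resulting $p$ has the property that $c$ restricted to the dense set of $q \circ p$ with $q$ eventually periodic (or otherwise finitely determined) is constant. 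The expected main obstacle is bookkeeping: the composition $q \circ p$ lets coordinates of $q$ far from the origin interact with coordinates of $p$ far from the origin, so the finite stabilization at stage $n$ must be phrased with enough uniformity to survive when we pass to the limit.

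Finally, to pass from clopen colourings to Borel colourings, I would invoke the abstract topological Ramsey machinery in the style of Galvin--Prikry and Ellentuck: the class of $X \subseteq \CS$ for which some $p \in \CS$ satisfies either $\CS \circ p \subseteq X$ or $(\CS \circ p) \cap X = \emptyset$ contains the clopens by the previous step and is closed under the Suslin operation by a standard combinatorial-forcing argument, hence contains all Borel sets. Applied to the $k$ preimages of a Borel colouring $c \colon \CS \to k$ and iterated $k$ times, this yields a single $p$ on which $c$ is constant, finishing the proof \cite{CarlsonSimpson}.
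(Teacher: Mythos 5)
First, note that the paper does not prove this statement at all: it is used as a black box, with the proof delegated to \cite{CarlsonSimpson} and to the Ramsey-space treatment in \cite[Section 5.6]{TodorcevicOrange}. So the relevant question is whether your sketch would amount to a correct proof of the theorem itself, and there I see a genuine gap in the combinatorial core. You propose to get the infinite theorem from the finite dual Ramsey theorem of Graham--Rothschild by a fusion/diagonalization. This is exactly the step that does not work: the Carlson--Simpson theorem is not obtainable from its finite counterpart by compactness or fusion. At stage $n$ of your construction you want to stabilize $c(q\circ p)$ for all $q$ with a prescribed behaviour on $\{0,\dots,n\}$; but even for a \emph{clopen} colouring, the initial segment of $q\circ p$ that determines $c(q\circ p)$ has no uniform length as $q$ ranges over the (non-compact) set of rigid surjections with that prescribed initial behaviour, so the finite Graham--Rothschild theorem gives you no single finite block of values of $p$ to decide these colours. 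What is actually needed at this point is an infinitary pigeonhole --- the Carlson--Simpson lemma on (left-)variable words, i.e.\ an infinite Hales--Jewett-type theorem, which plays the role of axiom A.4 for the dual Ramsey space in Todorcevic's framework. That lemma is the hard part of the theorem; it is proved by idempotent-ultrafilter methods or by a genuinely infinitary combinatorial forcing, and it is known not to follow from the finite Hales--Jewett or Graham--Rothschild theorems by limiting arguments. The difficulty you dismiss as ``bookkeeping'' is precisely this obstruction.

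The last layer of your plan also leans on machinery whose justification is the missing ingredient. Saying that the family of sets $X$ for which some $p$ has $\CS\circ p\subseteq X$ or $(\CS\circ p)\cap X=\emptyset$ ``contains the clopens and is closed under the Suslin operation by a standard combinatorial-forcing argument'' is essentially asserting the dual Ellentuck theorem; its proof is not a routine transcription of Galvin--Prikry/Ellentuck, because the amalgamation step of the combinatorial forcing again calls the infinitary variable-word pigeonhole (and one must also be careful that the family in question is not literally closed under complements coordinate-wise, so the ``iterate $k$ times'' step should be phrased via completely Ramsey sets in the dual Ellentuck topology, with Borelness in the product topology implying the Baire property there). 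In summary: your architecture (pigeonhole, fusion into an abstract Ramsey-space argument, then Borel/Suslin measurability) matches the known proofs in outline, but with the finite Graham--Rothschild theorem in the role of the pigeonhole the proof collapses; you would need to state and prove the Carlson--Simpson variable-word lemma (or verify A.4 for the dual Ramsey space by some equivalent means) for the argument to go through.
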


This theorem has first been proved in \cite{CarlsonSimpson}, where it has also be shown that the assumption that $c$ is Borel cannot be removed (although it can be relaxed to, e.g., $c$ being Souslin--measurable). For a more modern proof based on Todorcevic's theory of Ramsey spaces, we refer the reader to \cite[Section 5.6]{TodorcevicOrange}

\bigskip

\section{The Banach space \texorpdfstring{$\ell_\infty$}{l infinity}}\label{sec:EllInf}

The main goal of this section is to prove Theorem \ref{thm:OscStabEllInf}. We start with recalling some definitions and basic facts. If $X$ and $Y$ are Banach spaces, we will denote by ${X \choose Y}$ the set of all vector subspaces of $X$ that are linearly isometric to $Y$. In this paper, sequences of real numbers will be seen as functions $\N \to \R$; in particular, the $n$-th entry of a sequence $x$ will be denoted by $x(n)$. The Banach space $\ell_\infty$ is defined as the space of all bounded sequences $x \colon \N \to \R$ endowed with the norm $\|x\|_\infty \coloneq \sup_{n \in \N}|x(n)|$. The Banach space $c_0$ is the subspace of $\ell_\infty$ made of all sequences converging to $0$; it is endowed with the same norm. Finally, $\ell_1$ denotes the Banach space of all sequences $x \colon \N \to \R$ such that $\|x\|_1 \coloneq \sum_{n \in \N}|x(n)| < \infty$, endowed with the norm $\|\cdot\|_1$. Recall that $\ell_\infty$ can be seen as the topological dual of $\ell_1$, via the duality bracket defined by $\langle x, y \rangle \coloneq \sum_{n \in \N} x(n)y(n)$, for $x \in \ell_1$ and $y \in \ell_\infty$. The $\textit{weak-* topology}$ associated to this duality bracket is the coarsest topology on $\ell_\infty$ making all the maps $\langle x, \cdot \rangle \coloneq \ell_\infty \to \R$, $x \in \ell_1$, continuous. The space $\ell_\infty$ can also be endowed by the topology induced by the product topology on $\R^\N$ (simply called \textit{product topology} in what follows); this topology differs from the weak-* topology on the whole space. However it is not hard to see they coincide on the unit sphere $S_{\ell_\infty}$. In this paper we chose to refer to this topology as \textit{the weak-* topology} as this denomination is more common to Banach space theorists, however its presentation as the product topology will be the one used in proofs.

\smallskip

A central element of our proof is the right action $\ell_\infty \curvearrowleft \CS$ by composition. Observe that if $x \in \ell_\infty$ and $p \in \CS$, then, because $p$ is a surjection, we have $\{x(n) \mid n \in \N\} = \{x\circ p(m) \mid m \in \N\}$, so $\|x \circ p\|_\infty = \|x\|_\infty$. Hence, $\CS$ acts by linear isometries on $\ell_\infty$. Also observe that, for fixed $x \in \ell_\infty$, the map $\CS \to \ell_\infty$, $p \mapsto x \circ p$ is continuous when $\ell_\infty$ is endowed with the weak-* topology. Indeed, if $p, q \in \CS$ satisfy $p\restriction_m = q\restriction_m$, then we also have $x\circ p\restriction_m = x \circ q\restriction_m$.

\smallskip

If $X$ is a metric space, $A\subseteq X$, and $\varepsilon > 0$, define the \textit{$\varepsilon$-fattening} of $A$ as the set $(A)_\varepsilon \coloneq \{x \in X \mid (\exists y \in A)(d(x, y)\leqslant \varepsilon)\}$. The main element of our proof is the following proposition.

\begin{prop}\label{prop:MainLInf}
    For every integer $k \geqslant 1$, there exists $x_k \in S_{\ell_\infty}$ and $X_k \in {\ell_\infty \choose \ell_\infty}$ such that $S_{X_k} \subseteq (x_k \circ \CS)_\frac{2}{k}$.
\end{prop}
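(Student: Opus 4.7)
The plan is to construct $x_k$ and $X_k$ explicitly using a compatible block structure together with a finite $\frac{1}{2k}$-net in $[-1,1]$. First I set $\tilde D = \{i/k : -k \leq i \leq k\}$, which is symmetric, contains $0$ and $\pm 1$, and is $\frac{1}{2k}$-dense in $[-1,1]$. For $x_k$, I take a sequence in $\tilde D^{\N}$ attaining each value of $\tilde D$ infinitely often and whose first $2k+1$ entries already enumerate $\tilde D$; then $\|x_k\|_\infty = 1$. For $X_k$, I partition $\N = \bigsqcup_n B_n$ into infinite blocks with $\min B_n$ strictly increasing, and for each $n$ I fix a template $u_n \colon B_n \to \tilde D$ taking each value of $\tilde D$ infinitely often, with $u_n$ identically $0$ on a chosen initial segment of $B_n$. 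Set
\[
X_k := \bigl\{y \in \ell_\infty : y|_{B_n} = \alpha_n u_n \text{ for some } (\alpha_n) \in \ell_\infty \bigr\}.
\]
Since $\|u_n\|_\infty = 1$, the map $y \mapsto (\alpha_n)$ is a linear isometry from $X_k$ onto $\ell_\infty$, so $X_k \in \binom{\ell_\infty}{\ell_\infty}$.

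For the approximation, fix $y \in S_{X_k}$ with coefficients $(\alpha_n)$ (so $\sup_n |\alpha_n| = 1$) and let $\tilde \alpha_n$ be the nearest point of $\tilde D$ to $\alpha_n$, hence $|\alpha_n - \tilde \alpha_n| \leq \frac{1}{2k}$. Some index $n_0$ satisfies $|\alpha_{n_0}| > 1 - \frac{1}{2k}$, forcing $\tilde \alpha_{n_0} \in \{\pm 1\}$; up to replacing $y$ by $-y$ I may assume $\tilde \alpha_{n_0} = 1$. I then construct $p \in \CS$ recursively on $m = 0, 1, 2, \ldots$. Writing $n(m)$ for the unique index with $m \in B_{n(m)}$, set the \emph{target} $t(m) := [\tilde \alpha_{n(m)} u_{n(m)}(m)]_{\tilde D}$ (the rounding of $\tilde \alpha_{n(m)} u_{n(m)}(m)$ to $\tilde D$) and $V(m) := |\{p(0), \dots, p(m-1)\}|$; then assign $p(m) = V(m)$ if $x_k(V(m)) = t(m)$ (introducing a new value), or else any $j < V(m)$ with $x_k(j) = t(m)$. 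By the triangle inequality,
\[
|x_k(p(m)) - y(m)| \leq |t(m) - \tilde \alpha_{n(m)} u_{n(m)}(m)| + |\tilde \alpha_{n(m)} - \alpha_{n(m)}||u_{n(m)}(m)| \leq \tfrac{1}{2k} + \tfrac{1}{2k} = \tfrac{1}{k} \leq \tfrac{2}{k}.
\]
Rigidity holds since new values of $p$ are produced in the order $0,1,2,\dots$ by design, and surjectivity follows because on $B_{n_0}$ the targets coincide with $u_{n_0}$-values and therefore cycle through all of $\tilde D$, eventually matching every $x_k(j)$.

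The main obstacle is showing that the recursive definition of $p$ is well-posed at every $m$: early on, when $V(m)$ is small, the ``old'' values $\{x_k(0), \dots, x_k(V(m)-1)\}$ form only a small subset of $\tilde D$ and might fail to contain $t(m)$. I plan to handle this by the initial zeros of the templates, which ensure $t(m) = 0 = x_k(0)$ whenever $V(m)$ is still $1$, together with a careful choice of the partition $\{B_n\}$ guaranteeing that for every possible $n_0$, each block $B_n$ with $n \neq n_0$ contributes only its zero-valued initial entries to the segment of $\N$ on which $V$ is small. Once $V(m)$ reaches $2k+1$---which happens inside $B_{n_0}$ by our hypothesis that the first $2k+1$ values of $x_k$ enumerate $\tilde D$---the set $\{x_k(j) : j < V(m)\}$ equals $\tilde D$ and the recursion proceeds trivially thereafter. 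Tuning the partition $\{B_n\}$ and the templates $u_n$ so as to be simultaneously compatible with \emph{every} choice of $n_0$ is the chief combinatorial point of the argument.
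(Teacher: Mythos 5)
Your overall scheme (a block subspace $X_k$ built from templates, plus a greedy construction of a rigid surjection matching rounded values of $y$ against the entries of $x_k$) is the right kind of idea, and the isometry $X_k\cong\ell_\infty$ and the $\tfrac1{2k}+\tfrac1{2k}$ error budget are fine. But the step you yourself flag as ``the chief combinatorial point'' is a genuine gap, and the fix you sketch cannot be made to work. For the recursion defining $p$ to be well-posed (and for $p$ to be rigid), every time a target value occurs for the \emph{first} time it must equal $x_k(V(m))$, i.e.\ the new values of the interleaved target sequence must appear in exactly the first-occurrence order of the entries of $x_k$. The targets coming from a block $B_n$ with an intermediate coefficient (say $\alpha_n\approx 0.4$) are roundings of $\alpha_n u_n(\cdot)$, and scaling followed by rounding scrambles the order of first appearance in a way that depends on $\alpha_n$; no fixed enumeration $x_k$ can anticipate it. Your proposed remedy --- initial zeros plus a partition chosen so that, for every possible $n_0$, all other blocks contribute only zeros until $B_{n_0}$ has enumerated $\tilde D$ --- is unsatisfiable: since $n_0$ ranges over all blocks and the blocks have unbounded minima, a fixed block $B_{n_1}$ would have to vanish beyond every finite stage, contradicting the requirement that $u_{n_1}$ attain each value of $\tilde D$ infinitely often (already for two blocks the two requirements are circular). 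A smaller but real issue is the reduction ``replace $y$ by $-y$'': the set $x_k\circ\CS$ is not negation-symmetric in general (every $x_k\circ p$ begins with $x_k(0)$, and rigidity constrains first occurrences), so proving $-y\in(x_k\circ\CS)_{2/k}$ does not give $y\in(x_k\circ\CS)_{2/k}$ without further argument.

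What is missing is a mechanism that \emph{forces} the order of first appearance, and this is exactly how the paper's proof differs from yours: there $x_k$ is not an arbitrary enumeration of the grid but the staircase $\bigl(0,\tfrac1k,-\tfrac1k,\tfrac2k,-\tfrac2k,\ldots,1,-1,\tfrac{k-1}k,-\tfrac{k-1}k,\ldots,\tfrac1k,-\tfrac1k,0,\ldots\bigr)$, finitely supported, and $X_k$ is built from shifted copies of this very same block. Two structural properties --- consecutive entries differ by at most $1/k$ in modulus, and every negative entry is immediately preceded by its positive counterpart --- are inherited by nonnegative scalar multiples, by disjointly supported sums, and by the rounding map $h$, and they force any rounded element of $S_{X_k}$ to produce its values in precisely the first-occurrence order of $x_k$; this is what makes the rigid surjection exist. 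Negative coefficients are then handled not by negating $y$ but by the near-antisymmetry of the block under the shift, $\|x_k+S(x_k)\|_\infty=\tfrac1k$, which lets one replace $a(i)S^{4ki}(x_k)$ by $|a(i)|S^{4ki+1}(x_k)$ at cost $1/k$. Without building this kind of order-forcing structure into your templates, the greedy construction of $p$ genuinely fails on vectors with several nonzero, non-extreme coefficients.
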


We first show how to deduce Theorem \ref{thm:OscStabEllInf} from Proposition \ref{prop:MainLInf}.

\begin{proof}[Proof of Theorem \ref{thm:OscStabEllInf}]
    Let $\varepsilon > 0$, $K$ be a compact metric space and $f \colon S_{\ell_\infty} \to K$ be a weak-* Borel, uniformily continuous function. Fix an integer $k \geqslant 1$ with the property that for all $x, y \in S_{\ell}$, if $\|x - y\|_\infty \leqslant 2/k$, then $d(f(x), f(y)) \leqslant \varepsilon / 4$. Since $K$ is compact, we can find an open cover $(U_i)_{i < r}$ of $K$ by finitely many open sets of diameter at most $\varepsilon/2 > 0$; letting $A_i \coloneq U_i \setminus \bigcup_{j < i} U_j$, $(A_i)_{i < r}$ is a partition of $K$ into Borel sets of diameter at most $\varepsilon / 2$. Consider $x_k$ and $X_k$ given by Proposition \ref{prop:MainLInf}. Define a colouring $c \colon \CS \to r$ by $c(p) = i \Leftrightarrow f(x_k \circ p) \in A_i$. By our assumptions, this is a Borel colouring, so by the Carlson--Simpson theorem, we can find $p \in \CS$ such that $c$ is constant on $\CS \circ p$. Denote by $i_0$ the value of this constant. We have $f(x_0 \circ \CS \circ p) \in A_{i_0}$. Recall that $S_{X_k} \subseteq (x_k \circ \CS)_{\frac{2}{k}}$; since $\CS$ acts on $\ell_\infty$ by isometries, we deduce that $S_{X_k \circ p} = S_{X_k} \circ p \subseteq (x_k \circ \CS \circ p)_{\frac{2}{k}}$. By the choice of $k$, we hence have $f(S_{X_k}) \subseteq (f(x_k \circ \CS \circ p))_{\frac{\varepsilon}{4}} \subseteq (A_{i_0})_{\frac{\varepsilon}{4}}$. This concludes the proof, since $\diam((A_{i_0})_{\frac{\varepsilon}{4}}) \leqslant \varepsilon$.
\end{proof}

We now proceed to the proof of Proposition \ref{prop:MainLInf}. For this, we introduce some more notation. If $x \in \ell_\infty$, define the \textit{support} of $x$ as $\supp(x) \coloneq \{n \in \N \mid x(n) \neq 0\}$. If $x_0, x_1, \ldots$ are disjointly supported vectors, denote by $\sum_{i \in \N} x_i$ the vector whose $n$-th entry is $\sum_{i \in \N} x_i(n)$ for every $n \in \N$. Finally, define the \textit{forward shift} on $\ell_\infty$ as the map $S \colon \ell_\infty \to \ell_\infty$ defined by $S(x) = (0, x(0), x(1), \ldots)$. This is obviously a linear isometry. Now fix $k \geqslant 1$; our vector $x_k$ will be defined as follows:
$$x_k = \left(0, \frac{1}{k}, - \frac{1}{k}, \frac{2}{k}, -\frac{2}{k}, \ldots, \frac{k-1}{k}, -\frac{k-1}{k}, 1, -1, \frac{k-1}{k}, -\frac{k-1}{k}, \ldots, \frac{1}{k}, - \frac{1}{k}, 0, 0, 0, \ldots\right).$$
Observe that $\supp(x_k) = \llbracket 1, 4k-2\rrbracket$.

\begin{lem}\label{lem:PropertyOfxk}
    Let $a \in S_{\ell_\infty}$ be such that $a(n) \geqslant 0$ for all $n \in \N$. Let $n_0, n_1, \ldots$ be integers such that for all $i \in \N$, $n_{i+1} - n_i \geqslant 4k-1$. Let $x \coloneq \sum_{i \in \N}a(i)S^{n_i}(x_k)$. Then there exists $p \in \CS$ such that $\|x - x_k\circ p\|_\infty \leqslant 1/2k$.
\end{lem}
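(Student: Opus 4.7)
The plan is to build $p \in \CS$ realizing a rounded version of $x$: at each $n$, I want $x_k(p(n))$ to equal the closest element to $x(n)$ in the range $\{0, \pm 1/k, \ldots, \pm 1\}$ of $x_k$, which automatically gives pointwise error at most $1/(2k)$. For each block index $i$ and each $l \in \{1, \ldots, k\}$, let $t_{i,l} := \mathrm{round}(l \cdot a(i))$, so that the target value at offset $j$ (where $x_k(j) = \varepsilon \cdot l/k$ with $\varepsilon \in \{\pm 1\}$) is $\varepsilon \cdot t_{i,l}/k$. The sequence $(t_{i,l})_l$ is non-decreasing in $l$ with consecutive differences in $\{0, 1\}$ (since $a(i) \leq 1$), terminating at $m_i := t_{i,k} = \mathrm{round}(k \cdot a(i)) \in \{0, \ldots, k\}$. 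Since $\|a\|_\infty = 1$, there exists $i$ with $a(i) > 1 - 1/(2k)$, hence $m_i = k$; I let $i^* := \min\{i : m_i = k\}$, and one checks that $t_{i^*, l} = l$ for all $l$.

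I define $p$ in three stages. First, for $n \in [0, n_0]$ or on an inter-block gap $[n_i + 4k - 1, n_{i+1}]$ with $i < i^*$, put $p(n) := 0$. On each block $B_i := [n_i + 1, n_i + 4k - 2]$ with $i < i^*$, at offset $j$ with $x_k(j) = \varepsilon \cdot l/k$, set $p(n_i + j) := 2 t_{i,l} - 1$ if $\varepsilon = +1$ and $t_{i,l} \geq 1$, $p(n_i + j) := 2 t_{i,l}$ if $\varepsilon = -1$ and $t_{i,l} \geq 1$, and $p(n_i + j) := 0$ otherwise; this uses only the \emph{left-half} preimages of $x_k$, hence values in $\{0, \ldots, 2k - 2\}$. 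Second, on $B_{i^*}$, set $p(n_{i^*} + j) := j$, so that $x_k \circ p$ matches $x_k$ (shifted) on this block. Third, for blocks with $i > i^*$, apply the same local rule (left-half preimages if $m_i < k$, identity if $m_i = k$), and on each post-$i^*$ gap use its first position to place the next unused integer from $\{4k - 1, 4k, \ldots\}$ (with $p = 0$ elsewhere on the gap).

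The main obstacle is verifying that $p \in \CS$, i.e.\ rigidity together with surjectivity. The key point is that the property $t_{i, l+1} - t_{i, l} \in \{0, 1\}$ keeps the increments of the running maximum of $p$ bounded by $1$ within each block of the first stage, while the bound $m_i \leq k - 1$ there confines this running maximum below $2k - 2$. When we reach $B_{i^*}$, the identity assignment then increases the maximum smoothly from $\leq 2k - 2$ up to $4k - 2$ in unit steps. The third stage adds the values $4k - 1, 4k, \ldots$ in order, one per gap (and there are infinitely many such gaps, each of size at least $1$), yielding both surjectivity and continued rigidity. The error estimate is then routine: at each block position $n = n_i + j$ with $x_k(j) = \varepsilon \cdot l/k$, we have $|x(n) - x_k(p(n))| = |a(i) l - t_{i, l}|/k \leq 1/(2k)$; on $B_{i^*}$ we use $|1 - a(i^*)| \cdot |x_k(j)| \leq 1/(2k)$; everywhere else both sides vanish.
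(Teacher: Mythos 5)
Your construction is correct and is essentially the paper's proof in more explicit form: both round the entries of $x$ to the grid $\{0,\pm\tfrac1k,\dots,\pm1\}$ (your $t_{i,l}=\mathrm{round}(l\,a(i))$ is exactly the paper's rounding map $h$ applied blockwise), both exploit the first block whose coefficient lies within $\tfrac1{2k}$ of $1$ (your $i^*$, the paper's $i_0$) to sweep through all positions of $x_k$ and push the running maximum to $4k-2$, and both use the infinitely many zero positions to allocate the remaining values and secure surjectivity. The only difference is bookkeeping: the paper deduces rigidity from structural properties of the rounded vector (order of first appearance of values), while you verify the same fact directly via the running-maximum criterion.
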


\begin{proof}
    For $u \in [-1, 1]$, define $h(u)$ as follows:
    \begin{itemize}
        \item if $u \in \left[-\frac{1}{2k}, \frac{1}{2k}\right]$, then $h(u) = 0$;
        \item if $u \in \left(\frac{2l+1}{2k}, \frac{2l+ 3}{2k}\right]$ for an integer $l \geqslant 0$, then $h(u) = \frac{l+1}{k}$;
        \item if $u \in \left[-\frac{2l+3}{2k}, -\frac{2l+ 1}{2k}\right)$ for an integer $l \geqslant 0$, then $h(u) = -\frac{l+1}{k}$.
    \end{itemize}
    Observe that, for every $u, v \in [0, 1]$, the following holds:
    \begin{enumerate}[(a)]
        \item\label{it:hMult} $h(u)$ is an integer multiple of $1/k$;
        \item\label{it:hApprox} $|h(u) - u| \leqslant 1/2k$;
        \item\label{it:hMinus} $h(-u) = -h(u)$;
        \item\label{it:hRegul} $|u - v| \leqslant 1/k \Rightarrow |h(u) - h(v)| \leqslant 1/k$.
    \end{enumerate}

    \smallskip
    
    Let $y \coloneq h \circ x$. It follows from property (\ref{it:hApprox}) that $\|y - x\|_\infty \leqslant 1/2k$. So to prove our lemma, it is enough to find $p \in \CS$ such that $x_k\circ p = y$. Since $a \in S_{\ell_\infty}$, there exists $i_0 \in \N$ such that $a(i_0) > \frac{2k-1}{2k}$. Letting $N \coloneq n_{i_0} + 2k$, we have $x(N) = a(i_0)x_k(2k) = -a(i_0)$, thus $y(N) = h(-a(i_0)) = -1$. Now observe that the vector $x_k$ satisfies the following two properties:
    \begin{enumerate}[(i)]
        \item\label{it:xkLip} for every $n \in \N$, $||x_k(n+1)| - |x_k(n)|| \leqslant 1/k$;
        \item\label{it:xkSym} given $n \in \N$ and $u > 0$, if $x_k(n) = -u$, then $n \geqslant 1$ and $x_k(n-1) = u$. 
    \end{enumerate}
    It is immediate that the vector $x$ also satisfies properties (\ref{it:xkLip}) and (\ref{it:xkSym}). Using properties (\ref{it:hMinus}) and (\ref{it:hRegul}) of $h$, we deduce that the vector $y$ also satisfies properties (\ref{it:xkLip}) and (\ref{it:xkSym}). Hence the values taken by $y(n)$ for $0 \leqslant n \leqslant N$ should exactly be $0, 1/k, -1/k, \ldots, (k-1)/k, -(k-1)/k, 1, -1$, with this order of first appearance. Observe that those values are exactly $x_k(0), x_k(1), \ldots, x_k(2k)$. Hence, letting, for all $n \leqslant N$, $p(n)$ be the unique $m \leqslant 2k$ such that $x_k(m) = y(n)$, we define a rigid surjection $p \colon (N+1) \to (2k+1)$ such that $y\restriction_{N+1} = x_k\circ p\restriction_{N+1}$.

    \smallskip

    Since $y(N) = -1$ and $y(N+2k-1) = 0$, using the fact that $y$ satisfies properties (\ref{it:xkLip}) and (\ref{it:xkSym}), we deduce that we necessarily have:
    $$(x(N+1), \ldots, x(N+2k-2)) =  \left(\frac{k-1}{k}, -\frac{k-1}{k}, \ldots, \frac{1}{k}, - \frac{1}{k}\right).$$
    Hence, if we extend $p$ by letting, for each $1 \leqslant n \leqslant 2k-2$, $p(N+n) = 2k+n$, the map $p$ becomes a rigid surjection $(N+2k-1) \to (4k-1)$ still satisfying $y\restriction_{N+2k-1} = x_k\circ p\restriction_{N+2k-1}$.

    \smallskip

    Finally, observe that the condition $n_{i+1} - n_i \geqslant 4k-1$ ensures that the vector $x$ contains infinitely many zeros. Hence, we can extend $p$ to a rigid surjection $\N \to \N$ by induction in the following way: given $n \geqslant N+2k-1$,
    \begin{itemize}
        \item if $y(n) \neq 0$, let $p(n)$ be the unique element $m \in \llbracket 1, 2k \rrbracket$ such that $y(n) = x_k(m)$;
        \item if $y(n) = 0$, let $p(n) = \max \{p(0), \ldots, p(n-1)\} +1$.
    \end{itemize}
    We obviously still have $y = x_k \circ p$.
\end{proof}

\begin{proof}[Proof of Proposition \ref{prop:MainLInf}]
    Define $T \colon \ell_\infty \to \ell_\infty$ by $T(a) = \sum_{i \in \N} a(i)S^{4ki}(x_k)$. Observe that the vectors $S^{4ki}(x_k)$ are pairwise disjointly supported, so $T$ is well-defined and is a linear isometry. Thus, letting $X_k = T(\ell_\infty)$, we have $X_k \in {\ell_\infty \choose \ell_\infty}$.

    \smallskip

    Fix $a \in S_{\ell_\infty}$; we will prove the existence of $p \in \CS$ such that $\|T(a) - x_k\circ p\| \leqslant 2/k$. This will be enough to conclude, since all elements of $S_{X_k}$ are of the form $T(a)$, $a \in S_{\ell_\infty}$. For each $i \in \N$, let $n_i \coloneq 4ki$ if $a(i) \geqslant 0$ and $n_i \coloneq 4ki+1$ if $a(i) < 0$. Let $x \coloneq \sum_{i \in \N}|a(i)|S^{n_i}(x_k)$. By Lemma \ref{lem:PropertyOfxk}, we can find $p \in \CS$ such that $\|x - x_k \circ p\| \leqslant 1/2k$. Also note that sets of the form $\supp(|a(i)|S^{n_i}(x_k)) \cup \supp(a(i)S^{4ki}(x_k))$, $i \in \N$, are pairwise disjoint, hence we have:
    $$\|T(a) - x\| = \left\|\sum_{i \in \N}(a(i)S^{4ki}(x_k) - |a(i)|S^{n_i}(x_k))\right\| = \sup_{i \in \N} \|a(i)S^{4ki}(x_k) - |a(i)|S^{n_i}(x_k)\|.$$
    The quantity $\|a(i)S^{4ki}(x_k) - |a(i)|S^{n_i}(x_k)\|$ is nonzero iff $a(i) < 0$, in which case it is equal to $|a(i)| \cdot \|S^{4ki}(x_k) + S^{4ki+1}(x_k)\|$, itself equal to $|a(i)| \cdot \|x_k + S(x_k)\|$, since $S$ is an isometry. Observe that $\|x_k + S(x_k)\| = 1/k$; thus, it follows that $\|T(a) - x\|\leqslant 1/k$. Finally, we have $\|T(a) - x_k \circ p\| \leqslant 1/2k + 1/k \leqslant 2/k$.
\end{proof}

This finishes the proof of Theorem \ref{thm:OscStabEllInf}. The rest of this section is devoted to the proof of the following proposition, which shows that the assumption that $f$ is weak-* Borel cannot be removed from Theorem \ref{thm:OscStabEllInf}.

\begin{prop}\label{prop:diagonal}
    There exists a $1$-Lipschitz map $f \colon S_{\ell_\infty} \to [0, 1]$ such that for all $X \in {\ell_\infty \choose \ell_\infty}$, $f(S_X) = [0, 1]$.
\end{prop}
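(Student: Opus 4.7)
The plan is to build $f$ by a transfinite recursion using the axiom of choice. Well-order the set $[0,1] \times {\ell_\infty \choose \ell_\infty}$ as $((r_\alpha, X_\alpha))_{\alpha < \lambda}$, and for each $X \in {\ell_\infty \choose \ell_\infty}$ fix a linear isometry $T_X \colon \ell_\infty \to X$. At stage $\alpha$ I would pick a point $x_\alpha \in S_{X_\alpha}$ satisfying the prospective $1$-Lipschitz inequality $\|x_\alpha - x_\beta\|_\infty \geq |r_\alpha - r_\beta|$ for all $\beta < \alpha$; equivalently, $x_\alpha$ must avoid the open balls $B(x_\beta, |r_\alpha - r_\beta|)$ coming from the previously processed pairs.

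The key combinatorial observation enabling this choice is the following. The image $E_\alpha \coloneq T_{X_\alpha}(\{-1,1\}^\N)$ is contained in $S_{X_\alpha}$, has cardinality $\mathfrak{c}$, and, since $T_{X_\alpha}$ is an isometry while distinct elements of $\{-1,1\}^\N$ are at $\ell_\infty$-distance exactly $2$, the same holds for distinct elements of $E_\alpha$. Consequently any open ball of radius at most $1$ contains at most one point of $E_\alpha$: two such points $y_1, y_2$ in a ball around some $c$ would give $2 = \|y_1 - y_2\|_\infty \leq \|y_1 - c\|_\infty + \|c - y_2\|_\infty < 2$. Since every prior radius $|r_\alpha - r_\beta|$ is at most $1$, at most $|\alpha|$ points of $E_\alpha$ are forbidden, and a valid $x_\alpha$ exists provided $|\alpha| < \mathfrak{c}$.

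Once the $x_\alpha$'s have all been chosen, the assignment $x_\alpha \mapsto r_\alpha$ is a $1$-Lipschitz partial function into $[0,1]$. I would extend it to all of $S_{\ell_\infty}$ by the formula $f(x) \coloneq \min\bigl(1, \inf_\alpha(r_\alpha + \|x - x_\alpha\|_\infty)\bigr)$, which is $1$-Lipschitz, valued in $[0,1]$, and for which $f(x_\alpha) = r_\alpha$ (the infimum is attained at $\beta = \alpha$ thanks to the Lipschitz constraints ensuring $r_\beta + \|x_\alpha - x_\beta\|_\infty \geq r_\alpha$ for all $\beta$). Then, for every $X \in {\ell_\infty \choose \ell_\infty}$ and every $r \in [0,1]$, there is some $\alpha$ with $(r_\alpha, X_\alpha) = (r, X)$, and so $r = f(x_\alpha) \in f(S_X)$.

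The main obstacle I anticipate is keeping $|\alpha| < \mathfrak{c}$ throughout the recursion: the cardinality of ${\ell_\infty \choose \ell_\infty}$ is in fact $2^{\mathfrak{c}}$, because the isometric embeddings $T_\mathcal{U}(x) \coloneq (\lim_\mathcal{U} x, x(0), x(1), \ldots)$ associated to distinct nonprincipal ultrafilters $\mathcal{U}$ on $\N$ have distinct images (a witness like $y_{m+1} = \mathbb{1}_B(m)$ with $y_0 = 1$ for $B \in \mathcal{U} \setminus \mathcal{V}$ distinguishes them). So $\lambda > \mathfrak{c}$ and at late stages the naive counting breaks. I expect this to be sidestepped by exploiting that $f(S_X)$ is automatically an interval (since $S_X$ is connected and $f$ is continuous), so surjectivity follows from merely hitting the two endpoints $r \in \{0,1\}$, and by further using the antipodal symmetry $X = -X$ of vector subspaces to consolidate the two endpoint requirements for each $X$ into a single well-chosen $x_\alpha$ together with its negative.
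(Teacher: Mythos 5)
Your per-stage separation argument and the final McShane-type extension are both sound: the sign vectors $T_{X_\alpha}(\{-1,1\}^\N)$ give continuum many points of $S_{X_\alpha}$ pairwise at distance $2$, so each open ball of radius at most $1$ forbids at most one of them, and the truncated infimum formula does extend a $1$-Lipschitz partial assignment while preserving its values. This is essentially the paper's Lemma~\ref{lem:SeparatedFamilies} (indeed your version even avoids the norm-one projection of Proposition~\ref{prop:InjLInf}). But the obstacle you flag at the end is the actual crux, and your proposed sidestep does not remove it. Even after using connectedness of $S_X$ to reduce to hitting the endpoints $0$ and $1$, and antipodal symmetry to pair them up, you still have one requirement per copy $X$, hence $2^{\mathfrak{c}}$ requirements (your ultrafilter computation of $\bigl|{\ell_\infty \choose \ell_\infty}\bigr| = 2^{\mathfrak{c}}$ is correct). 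A transfinite recursion of length $2^{\mathfrak{c}}$ in which each stage only survives fewer than $\mathfrak{c}$ previously imposed ball-constraints cannot be completed: past stage $\mathfrak{c}$ the forbidden balls may exhaust all of $E_\alpha$, and nothing in your setup prevents this. So as written the construction breaks down exactly where you suspected, and the symmetry/endpoint trick changes the number of colours to hit, not the number of copies to serve.

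The missing idea, which is the substantive content of the paper's proof, is to shrink the index set of the recursion rather than the set of target values: Lemma~\ref{lem:CofinalFamilyCopies} produces a family $\mathcal{F} \subseteq {\ell_\infty \choose \ell_\infty}$ of size only $\mathfrak{c}$ that is \emph{cofinal}, i.e.\ every $X \in {\ell_\infty \choose \ell_\infty}$ contains some $Y \in \mathcal{F}$; since $S_Y \subseteq S_X$, it suffices to meet the requirements for members of $\mathcal{F}$, and then the length of the recursion is $\mathfrak{c}$ and your counting goes through. This cofinality is not a soft fact: it rests on Proposition~\ref{prop:OperatorsOnLIinf} (an operator on $\ell_\infty$ vanishing on $c_0$ vanishes on some $\ell_\infty(A)$ with $A$ infinite), which shows that two linear isometries of $\ell_\infty$ agreeing on $c_0$ agree on a subcopy $\ell_\infty(A)$, so a copy of $\ell_\infty$ is determined ``up to a large subcopy'' by the restriction of its defining isometry to $c_0$ --- and there are only $\mathfrak{c}$ such restrictions. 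Without some ingredient of this kind your argument cannot get past the cardinality barrier, so the proposal has a genuine gap at its key step (the paper then concludes much as you envisage, building two $1$-separated sets $A$, $B$ meeting every sphere and taking $f = d(\cdot, A)$, truncated at $1$).
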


Given $A \subseteq \N$, we abusively let $\ell_\infty(A) \coloneq \{x  \in \ell_\infty \mid \supp(x) \subseteq A\}$; for infinite $A$, this is an element of $\ell_\infty \choose \ell_\infty$. To prove Proposition \ref{prop:diagonal} we will need the two following following facts, well-known to Banach space theorists (see \cite[Proposition 2.5.2]{AlbiacKalton} and \cite[Theorem 2.5.4]{AlbiacKalton}, respectively).

\begin{prop}\label{prop:InjLInf}
    Let $X$ be a Banach space. If $Y \in {X \choose \ell_\infty}$, then there exists a linear continuous projection $P \colon X \to Y$ having operator norm $1$ (i.e. such that $\|P(x)\| \leqslant \|x\|$ for every $x \in X$).
\end{prop}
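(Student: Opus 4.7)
The plan is to use the injectivity of $\ell_\infty$, which is a classical consequence of the Hahn--Banach theorem. The key observation is that each coordinate evaluation $y \mapsto y(n)$ is a norm-$1$ functional on $\ell_\infty$, and these functionals together recover the identity map (via the sup norm). Transferring this picture to $Y$ and using Hahn--Banach to extend to $X$ will yield the desired projection.

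More precisely, I would first fix a linear isometric bijection $\phi \colon \ell_\infty \to Y$. For each $n \in \N$, define a linear functional $e_n^* \colon Y \to \R$ by $e_n^*(y) \coloneq \phi^{-1}(y)(n)$. Since $|\phi^{-1}(y)(n)| \leqslant \|\phi^{-1}(y)\|_\infty = \|y\|$, each $e_n^*$ has norm at most $1$ as a functional on $Y$. By the Hahn--Banach theorem, each $e_n^*$ extends to a functional $\tilde{e}_n^* \colon X \to \R$ with $\|\tilde{e}_n^*\| \leqslant 1$, that is, $|\tilde{e}_n^*(x)| \leqslant \|x\|$ for every $x \in X$.

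Next, I would define $P \colon X \to Y$ by
$$P(x) \coloneq \phi\bigl((\tilde{e}_n^*(x))_{n \in \N}\bigr).$$
The sequence $(\tilde{e}_n^*(x))_{n \in \N}$ is uniformly bounded by $\|x\|$, so it lies in $\ell_\infty$ and $P$ is well-defined. Linearity of $P$ follows from linearity of $\phi$ and of each $\tilde{e}_n^*$. The norm estimate
$$\|P(x)\| = \bigl\|(\tilde{e}_n^*(x))_{n \in \N}\bigr\|_\infty = \sup_{n \in \N} |\tilde{e}_n^*(x)| \leqslant \|x\|$$
is immediate. Finally, to check that $P$ is a projection onto $Y$, take $y \in Y$: since $\tilde{e}_n^*$ extends $e_n^*$, we have $\tilde{e}_n^*(y) = e_n^*(y) = \phi^{-1}(y)(n)$, whence $P(y) = \phi(\phi^{-1}(y)) = y$.

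There is no genuine obstacle here; the only subtle point is recognizing that the sup-norm structure of $\ell_\infty$ allows one to define $P$ coordinate-by-coordinate, so that the control obtained from Hahn--Banach on each coordinate passes through to a control on the whole projection. This is precisely what fails for $\ell_p$ with $p < \infty$, and it is what makes $\ell_\infty$ a $1$-injective Banach space.
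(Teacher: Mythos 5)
Your proof is correct: extending each coordinate functional $y \mapsto \phi^{-1}(y)(n)$ by Hahn--Banach and reassembling via the sup norm is exactly the standard argument that an isometric copy of $\ell_\infty$ is $1$-complemented. The paper does not prove this proposition itself but cites it (Proposition 2.5.2 of Albiac--Kalton), and your argument is essentially the proof given there, so there is nothing to add.
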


\begin{prop}\label{prop:OperatorsOnLIinf}
    Let $T \colon \ell_\infty \to \ell_\infty$ be a continuous linear map. Suppose that $T$ vanishes on $c_0$. Then $T$ vanishes on $\ell_\infty(A)$ for some infinite $A \subseteq \N$.
\end{prop}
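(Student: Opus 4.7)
The plan is to reduce the proposition to a measure-theoretic statement about bounded diffuse finitely additive measures on $\N$, prove that via a halving plus diagonalization argument, and then recover the operator statement by step-function approximation.

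For each $k \in \N$, associate to $T$ the functional $\mu_k \in \ell_\infty^*$ given by $\mu_k(x) := T(x)(k)$; then $\|\mu_k\| \leq \|T\|$ uniformly and $T(x) = (\mu_k(x))_k$. Identifying $\ell_\infty^*$ with the space $\mathrm{ba}(\N)$ of bounded signed finitely additive measures on $\N$, the hypothesis $T|_{c_0} = 0$ translates into each $\mu_k$ vanishing on every finite subset of $\N$; correspondingly the positive total-variation measure $|\mu_k|$ is a bounded finitely additive positive measure on $\N$ that vanishes on finite sets.

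The key lemma is: every positive bounded finitely additive measure $\nu$ on $\N$ vanishing on every finite set admits an infinite $B \subseteq \N$ with $\nu(B) = 0$. I would prove this by halving plus diagonalization: partition $\N$ into two disjoint infinite subsets; by finite additivity one of them has $\nu$-mass at most $\nu(\N)/2$, and call it $B^{(1)}$. Iterating inside $B^{(1)}$ produces a nested chain $\N \supseteq B^{(1)} \supseteq B^{(2)} \supseteq \cdots$ of infinite sets with $\nu(B^{(k)}) \leq \nu(\N)/2^k$. Then pick $a_k \in B^{(k)} \setminus \{a_0, \ldots, a_{k-1}\}$ (possible since each $B^{(k)}$ is infinite) and set $B := \{a_k : k \in \N\}$. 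Nestedness gives $\{a_j : j \geq k\} \subseteq B^{(k)}$, hence $B \subseteq B^{(k)} \cup \{a_0, \ldots, a_{k-1}\}$; since $\nu$ vanishes on finite sets, $\nu(B) \leq \nu(B^{(k)}) \leq \nu(\N)/2^k$ for every $k$, and letting $k \to \infty$ gives $\nu(B) = 0$.

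Applying the lemma iteratively to $(|\mu_k|)_{k \in \N}$ yields a nested chain $\N \supseteq A_1 \supseteq A_2 \supseteq \cdots$ of infinite sets with $|\mu_k|(A_k) = 0$: having chosen $A_k$, view $|\mu_{k+1}|$ as a bounded positive finitely additive measure on $A_k$ (still vanishing on finite sets), and the lemma produces $A_{k+1} \subseteq A_k$ infinite with $|\mu_{k+1}|(A_{k+1}) = 0$. A final diagonal argument---pick $a_k \in A_k \setminus \{a_0, \ldots, a_{k-1}\}$, set $A := \{a_k\}$---gives an infinite $A$ with $A \subseteq A_k \cup \{a_0, \ldots, a_{k-1}\}$ for every $k$, so $|\mu_k|(A) \leq |\mu_k|(A_k) = 0$. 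To conclude, fix $x \in \ell_\infty(A)$ and $\varepsilon > 0$: partitioning the range of $x$ into finitely many intervals of length $\varepsilon$ produces a step function $x_\varepsilon = \sum_{j=1}^N c_j \mathbf{1}_{B_j}$ with $B_j \subseteq A$ and $\|x - x_\varepsilon\|_\infty \leq \varepsilon$; since $|\mu_k|(B_j) \leq |\mu_k|(A) = 0$ forces $\mu_k(B_j) = 0$, we have $\mu_k(x_\varepsilon) = 0$, and hence $|\mu_k(x)| = |\mu_k(x - x_\varepsilon)| \leq \|T\| \cdot \varepsilon$. Letting $\varepsilon \to 0$ gives $\mu_k(x) = 0$ for every $k$, i.e., $T(x) = 0$, as required. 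The main obstacle is the key lemma: the halving-plus-diagonalization trick is the essential combinatorial ingredient that bypasses the failure of countable additivity for elements of $\mathrm{ba}(\N)$.
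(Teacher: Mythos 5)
Your proof is correct. Note that the paper does not prove this proposition at all: it is quoted as a known fact with a reference to Albiac--Kalton, where the statement (essentially Whitley's lemma behind Phillips's theorem that $c_0$ is uncomplemented in $\ell_\infty$) is proved by a quite different argument: one takes an uncountable almost disjoint family $\{A_i\}$ of infinite subsets of $\N$, assumes $T$ is nonzero on each $\ell_\infty(A_i)$ with witnesses $x_i$, pigeonholes so that a single coordinate functional $\mu_k$ satisfies $|\mu_k(x_i)|\geqslant\delta$ for uncountably many $i$, and then contradicts $\|\mu_k\|<\infty$ by summing finitely many suitably signed witnesses, the finite pairwise overlaps being invisible to $\mu_k$ since it kills finitely supported vectors. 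Your route replaces the almost disjoint family by a purely measure-theoretic argument: the coordinate functionals become finitely additive measures vanishing on finite sets, your halving-plus-diagonalization lemma shows each total variation $|\mu_k|$ has an infinite null set, a second iteration-plus-diagonalization produces a common infinite null set $A$, and step-function approximation transfers this back to $T$ on $\ell_\infty(A)$. Both proofs exploit the same essential feature, namely that the codomain $\ell_\infty$ is separated by countably many coordinate functionals (without some such restriction the statement is false, e.g.\ for the quotient map $\ell_\infty\to\ell_\infty/c_0$, so your reduction to the $\mu_k$ is not a convenience but the crux). What your version buys is a self-contained, elementary proof needing neither an uncountable almost disjoint family nor any pigeonholing over an uncountable index set; the only facts you quote, that the total variation of a bounded finitely additive measure is a bounded positive finitely additive measure and that it vanishes on finite sets when the measure does, are standard and could be verified in a couple of lines. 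The textbook argument, by contrast, is shorter once the almost disjoint family is available and yields a monochromatic-type conclusion inside any prescribed uncountable almost disjoint family, which your nested-chain construction does not provide; for the purposes of this paper either conclusion suffices.
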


The proof of Proposition \ref{prop:diagonal} relies on a diagonal argument; the use of such an argument to build counterexamples for non-definable colourings in infinite-dimensional Ramsey theory is classical. However, due to the specificities of the present situation, we will need two simple preliminary lemmas.

\begin{lem}\label{lem:CofinalFamilyCopies}
    There exists a family $\mathcal{F} \subseteq {\ell_\infty \choose \ell_\infty}$, having size continuum, such that for every $X \in {\ell_\infty \choose \ell_\infty}$, there exists $Y \in \mathcal{F}$ with $Y \subseteq X$.
\end{lem}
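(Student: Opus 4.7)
The plan is to take $\mathcal{F}$ to consist of all subspaces of $\ell_\infty$ of the form
\[
Y_{(z_i)} := \left\{\sum_{i \in \N} a(i)\, z_i \;:\; a \in \ell_\infty\right\},
\]
where $(z_i)_{i \in \N}$ ranges over sequences of pairwise disjointly supported elements of $S_{\ell_\infty}$. For every such sequence, the map $R \colon a \mapsto \sum_i a(i) z_i$ is a weak-* continuous linear isometry $\ell_\infty \to \ell_\infty$ (each coordinate of $R(a)$ depends only on a single coordinate of $a$), so $Y_{(z_i)} \in {\ell_\infty \choose \ell_\infty}$. The set of sequences of pairwise disjointly supported elements of $S_{\ell_\infty}$ has cardinality at most $|S_{\ell_\infty}|^{\aleph_0} = (2^{\aleph_0})^{\aleph_0} = 2^{\aleph_0}$, bounding $|\mathcal{F}|$.

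To establish cofinality, I would fix $X \in {\ell_\infty \choose \ell_\infty}$ and use Proposition \ref{prop:InjLInf} to obtain a norm-$1$ projection $P \colon \ell_\infty \to X$. The argument has two steps. First, I would construct, inside $X$, a sequence $(z_i)_{i \in \N}$ of pairwise disjointly supported unit vectors. Second, with $R \colon \ell_\infty \to \ell_\infty$, $R(a) := \sum_i a(i) z_i$, consider the bounded operator $S := P \circ R - R$. Since $z_i \in X$, we have $P(z_i) = z_i$, so $S(e_i) = 0$ for every $i$, and $S$ vanishes on $c_0$ by linearity and norm-continuity. Proposition \ref{prop:OperatorsOnLIinf} then produces an infinite set $A \subseteq \N$ on which $S$ vanishes identically, meaning that $R(a) = P(R(a)) \in X$ for every $a \in \ell_\infty(A)$, i.e.\ $R(\ell_\infty(A)) \subseteq X$. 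Re-enumerating $(z_i)_{i \in A}$ as $(z'_k)_{k \in \N}$ then yields a subspace $Y_{(z'_k)} = R(\ell_\infty(A))$ lying in both $\mathcal{F}$ and $X$, as required.

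The main obstacle is the first step: extracting from $X$ a sequence of \emph{exactly} pairwise disjointly supported unit vectors. Finding merely ``essentially'' disjointly supported vectors (i.e.\ with pairwise disjoint blocks of coordinates on which the values are close to $\pm 1$) is straightforward via a gliding-hump argument using weak-* compactness of the unit ball of $\ell_\infty$, together with the fact that for every finite $F \subseteq \N$ the restriction map $X \to \ell_\infty(F)$ has finite-dimensional image so its kernel $X \cap \ell_\infty(\N \setminus F)$ is infinite-dimensional; but passing from essentially to exactly disjoint supports while remaining in $X$ is the delicate part, and will require a careful iteration exploiting the norm-$1$ projection $P$ together with the coordinatewise structure of $\ell_\infty$.
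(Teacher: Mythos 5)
The construction fails at exactly the point you flag, and the failure is not a technical delicacy: Step~1 is false in general, and with it the cofinality of your family $\mathcal{F}$. There are copies $X \in {\ell_\infty \choose \ell_\infty}$ containing \emph{no two} nonzero disjointly supported vectors. Indeed, identify $\N$ with $\N \times \N$, fix parameters $t_{k,m} \in (0, \tfrac12)$ that are pairwise distinct as $m$ varies for each fixed $k$, and define $T \colon \ell_\infty \to \ell_\infty$ by $(Tx)(k,m) = (1-2^{-m})\,x(k) + 2^{-m-1}\sum_{j \in \N} t_{k,m}^{\,j}\, x(j)$. Each coordinate functional has norm at most $(1-2^{-m}) + 2^{-m-1}\frac{1}{1-t_{k,m}} \leqslant 1$, and letting $m \to \infty$ with $|x(k)|$ close to $\|x\|_\infty$ shows $\|Tx\|_\infty = \|x\|_\infty$; so $X \coloneq T(\ell_\infty) \in {\ell_\infty \choose \ell_\infty}$. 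Now suppose $x, y \neq 0$ and $Tx$, $Ty$ are disjointly supported. For fixed $k$ one has $(Tx)(k,m) \to x(k)$ as $m \to \infty$, so if some $k$ lay in $\supp(x) \cap \supp(y)$, both $(Tx)(k,m)$ and $(Ty)(k,m)$ would eventually be nonzero; hence $\supp(x) \cap \supp(y) = \emptyset$. Pick $k_0 \in \supp(x)$: then $(Tx)(k_0,m) \neq 0$ for all large $m$, while $(Ty)(k_0,m) = 2^{-m-1} f_y(t_{k_0,m})$, where $f_y(z) = \sum_j y(j) z^j$ is a nonzero analytic function on the unit disc; since the points $t_{k_0,m}$ are infinitely many distinct points accumulating in $[0,\tfrac12]$, the identity theorem gives $f_y(t_{k_0,m}) \neq 0$ for all but finitely many $m$ — contradicting disjointness again. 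Consequently this $X$ contains no $Y_{(z_i)}$ at all (such a $Y$ would contain the disjointly supported vectors $z_i$), so your $\mathcal{F}$ is not cofinal: the problem is the choice of family, not merely the missing iteration, and no gliding-hump argument with $P$ can repair it, since approximate disjointness is all one can extract and then $R$ no longer maps into $X$.

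Your Step~2 is, in spirit, exactly the paper's key move — apply Proposition~\ref{prop:OperatorsOnLIinf} to a difference of two operators agreeing on $c_0$ — but the paper arranges this without ever producing disjointly supported vectors inside $X$. It considers the set $\mathcal{I}$ of all linear isometries $\ell_\infty \to \ell_\infty$, declares $S \sim T$ when $S\restriction_{c_0} = T\restriction_{c_0}$, and observes there are only continuum many classes because an isometry restricted to $c_0$ is determined by the images of the canonical basis vectors; choosing a set $\mathcal{T}$ of representatives, it takes $\mathcal{F} = \{T(\ell_\infty(A)) \mid T \in \mathcal{T},\ A \subseteq \N \text{ infinite}\}$. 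Given $X = S(\ell_\infty)$, pick $T \in \mathcal{T}$ with $T \sim S$ and apply Proposition~\ref{prop:OperatorsOnLIinf} to $T - S$ to obtain an infinite $A$ with $T = S$ on $\ell_\infty(A)$; then $T(\ell_\infty(A)) = S(\ell_\infty(A)) \subseteq X$ lies in $\mathcal{F}$. In other words, the ``skeleton'' playing the role of your $(z_i)$ is the image of the basis under the representative $T$, which need not be disjointly supported, and Proposition~\ref{prop:InjLInf} is not needed here (the paper uses it only for Lemma~\ref{lem:SeparatedFamilies}). To salvage your argument, replace your family by this one; your counting argument and your use of Proposition~\ref{prop:OperatorsOnLIinf} then go through essentially unchanged.
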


\begin{proof}
    Let $\mathcal{I}$ be the set of all linear isometries $\ell_\infty \to \ell_\infty$. On $\mathcal{I}$ we define an equivalence relation $\sim$ by $S \sim T \Leftrightarrow S\restriction_{c_0} = T\restriction_{c_0}$. Note that a linear isometry $c_0 \to \ell_\infty$ is entirely determined by the images of vectors of the canonical basis of $c_0$; hence there are continuum-many isometries $c_0 \to \ell_\infty$. Thus the equivalence relation $\sim$ has continuum-many classes. Let $\mathcal{T} \subseteq \mathcal{I}$ with $|\mathcal{T}| = \mathfrak{c}$ be a set intersecting each class of $\sim$. We let $$\mathcal{F} \coloneq \{T(\ell_\infty(A)) \mid T \in \mathcal{T}, A \subseteq \N \text{ infinite}\}.$$
    Let us show that $\mathcal{F}$ is as wanted. First, we clearly have $|\mathcal{F}| \leqslant \mathfrak{c}$. Now, given an arbitrary $X \in {\ell_\infty \choose \ell_\infty}$, one can find $S \in \mathcal{I}$ such that $X = S(\ell_\infty)$. Let $T \in \mathcal{T}$ be such that $T \sim S$. By Proposition \ref{prop:OperatorsOnLIinf}, there should exist an infinite $A \subseteq \N$ such that $T\restriction_{\ell_\infty(A)} = S\restriction_{\ell_\infty(A)}$. Hence, letting $Y \coloneq S(\ell_\infty(A)) = T(\ell_\infty(A))$, we have $Y \in \mathcal{F}$ and $Y \subseteq X$.
    \end{proof}

    \begin{lem}\label{lem:SeparatedFamilies}
        Let $X \in {\ell_\infty \choose \ell_\infty}$ and $A \subseteq S_{\ell_\infty}$
        with $|A| < \mathfrak{c}$. Then there exists $x \in S_X$ with $d(x, A) \geqslant 1$.
    \end{lem}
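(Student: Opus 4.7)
The plan is to produce inside $S_X$ a $2$-separated family of size $\mathfrak{c}$ and then apply a trivial triangle-inequality counting argument: only $|A|<\mathfrak{c}$ members of such a family can lie within distance less than $1$ of some point of $A$, so at least one member witnesses the lemma. I do not anticipate any real obstacle here; this is just the classical observation that $\ell_\infty$ carries a $2$-separated family of continuum size inside its unit sphere, namely the $\pm 1$ sign sequences, transported into $X$ by an isometry.

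To implement this, since $X \in {\ell_\infty \choose \ell_\infty}$, I would fix a linear isometry $T \colon \ell_\infty \to X$ and set $\mathcal{F} \coloneq \{T(\sigma) \mid \sigma \in \{-1,+1\}^\N\} \subseteq S_X$. Any two distinct sign sequences $\sigma \neq \sigma' \in \{-1,+1\}^\N$ differ by $\pm 2$ in at least one coordinate, so $\|\sigma - \sigma'\|_\infty = 2$; as $T$ is a linear isometry preserving norms, the family $\mathcal{F}$ is a $2$-separated subset of $S_X$ of cardinality $\mathfrak{c}$.

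For the counting step, by the triangle inequality, for any fixed $a \in S_{\ell_\infty}$ there can be at most one $x \in \mathcal{F}$ with $d(x, a) < 1$, since two such points would be at mutual distance strictly less than $2$, contradicting the $2$-separation of $\mathcal{F}$. Hence the bad set $B \coloneq \{x \in \mathcal{F} \mid d(x, A) < 1\}$ satisfies $|B| \leqslant |A| < \mathfrak{c} = |\mathcal{F}|$, so $\mathcal{F} \setminus B$ is nonempty and any of its elements $x$ satisfies the required $d(x, A) \geqslant 1$.
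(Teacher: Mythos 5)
Your proof is correct, and it takes a genuinely different route from the paper's. Both arguments ultimately rest on the continuum-sized family of $\pm 1$ sign sequences, but you transport that family into $X$ via a surjective linear isometry $T\colon \ell_\infty \to X$ and then run a pigeonhole argument: since $\{T(\sigma) \mid \sigma \in \{-1,+1\}^\N\}$ is $2$-separated in $S_X$, each $a \in A$ can be within distance $<1$ of at most one of its members, so fewer than $\mathfrak{c}$ members can be ``bad'' and one survives. The paper instead first proves the special case $X = \ell_\infty$ by choosing a sign sequence $x$ different from every sign pattern $s \circ a$ (which forces $\|x - a\|_\infty \geqslant 1$ coordinatewise), and then reduces the general case to this one by means of a norm-one projection $P \colon \ell_\infty \to X$, i.e.\ Proposition \ref{prop:InjLInf} (the injectivity of $\ell_\infty$), via $\|x - a\| \geqslant \|x - P(a)\|$. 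What your approach buys is elementarity and self-containedness: it never needs the projection, so Proposition \ref{prop:InjLInf} could be dispensed with altogether for this lemma (it is not used elsewhere in the paper); what the paper's reduction buys is essentially only that the avoidance argument is carried out intrinsically inside $X \cong \ell_\infty$, at the cost of invoking a nontrivial Banach-space fact. Your counting step is sound: the map sending each bad point of the $2$-separated family to a witnessing element of $A$ is injective by the triangle inequality, so the bad set has size at most $|A| < \mathfrak{c}$.
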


    \begin{proof}
        We first prove the result in the special case when $X = \ell_\infty$. Define $s \colon [-1, 1] \to \{-1, 1\}$ by $s(x) = 1 \Leftrightarrow x \geqslant 0$. Then $|\{s\circ a \mid a \in A\}| < \mathfrak{c}$, so there exists $x \in \{-1, 1\}^\N$ such that $x \neq s\circ a$ for all $a \in A$. It easily follows that $d(x, A) \geqslant 1$. 

        \smallskip

        In the general case, use Proposition \ref{prop:InjLInf} to find a continuous linear projection $P \colon \ell_\infty \to X$ of operator norm $1$. Let $B \coloneq P(A)$, a subset of $S_X$ with $|B| < \mathfrak{c}$. Applying the special case to $B$ in the subspace $X$, we can find $x \in S_X$ such that $d(x, B) \geqslant 1$. Now, if $a \in A$, we have $\|x - a\| \geqslant \|P(x-a)\| = \|x - P(a)\| \geqslant 1$, so $d(x, A) \geqslant 1$.
    \end{proof}

    \begin{proof}[Proof of Proposition \ref{prop:diagonal}]
        Let $\mathcal{F} \subseteq {\ell_\infty \choose \ell_\infty}$ be the family given by Lemma \ref{lem:CofinalFamilyCopies}. Enumerate $\mathcal{F} \eqcolon \{X_\alpha \mid \alpha < \mathfrak{c}\}$. We recursively build two families $(a_\alpha)_{\alpha < \mathfrak{c}}$ and $(b_\alpha)_{\alpha < \mathfrak{c}}$ as follows: given $\alpha < \mathfrak{c}$, using Lemma \ref{lem:SeparatedFamilies}, we can choose $a_\alpha, b_\alpha \in S_{X_\alpha}$ such that $d(a_\alpha, \{b_\beta \mid \beta < \alpha\}) \geqslant 1$ and  $d(b_\alpha, \{a_\beta \mid \beta < \alpha\}) \geqslant 1$. Letting $A \coloneq \{a_\alpha \mid \alpha < \mathfrak{c}\}$ and $B \coloneq \{b_\alpha \mid \alpha < \mathfrak{c}\}$, we have that $d(A, B) \geqslant 1$; moreover, by the choice of $\mathcal{F}$, the sets $A$ and $B$ both intersect the unit spheres of all elements of ${\ell_\infty \choose \ell_\infty}$. We now let, for $x \in S_{\ell_\infty}$, $f(x) \coloneq d(x, A)$; the map $f$ satisfies the desired properties.
    \end{proof}

\bigskip

\section{The Urysohn sphere}\label{sec:Urysohn}

A metric space $X$ is said to be \textit{ultrahomogeneous} if every isometry $T \colon A \to X$, where $A \subseteq X$ is finite, extends to an onto isometry $X \to X$. The \textit{Urysohn sphere} $\mathcal{S}$ is the unique ultrahomogeneous complete metric space of diameter $1$ containing isometric copies of all finite metric spaces of diameter at most $1$.

\smallskip

Denote by $\mathfrak{S}_1$ the class of all separable metric spaces with diameter at most $1$. Say that a metric space $X$ is \textit{$\mathfrak{S}_1$-universal} if $X \in \mathfrak{S}_1$ and if every $Y \in \mathfrak{S}_1$ isometrically embeds into $X$.  It is a classical result that $\mathcal{S}$ is $\mathfrak{S}_1$-universal. Say that a metric space is \textit{oscillation stable} if it satisfies the conclusion of Theorem \ref{thm:NVTS}. Observe that if two metric spaces $X$ and $Y$ are pairwise isometrically bi-embeddable and if $X$ is oscillation stable, then so is $Y$. Therefore, to prove Theorem \ref{thm:NVTS}, we just need to find \textit{one} $\mathfrak{S}_1$-universal space $\mathcal{U}$ that is oscillation stable. Our space $\mathcal{U}$ will be defined as a space of sequences, in order to be able to define an action of $\CS$ on it. For technical reasons, it will not be a metric space but only a pseudometric space. Recall that a \textit{pseudometric space} is a space $(X, d)$ where the map $d \colon X^2 \to [0, \infty)$ (called a \textit{pseudometric}) satisfyies all axioms of a metric except that one can have $d(x, y) = 0$ even for $x \neq y$. Given a pseudometric space $(X, d)$, one can quotient $X$ by the equivalence relation $\sim$ defined by $x \sim y \Leftrightarrow d(x, y) = 0$; $d$ gives rise to a metric on the quotient. In this paper, we will often identify a pseudometric space and its canonical quotient; it will never be an issue.

\smallskip

Let $\mathcal{U} \coloneq \{x \in [0, 1]^\omega \mid \liminf x = 0\}$. For $x, y \in \mathcal{U}$ and $n \in \omega$, define:
\begin{itemize}
    \item $m(x, y, n) \coloneq \sup_{k \leqslant n} |x(k) - y(k)|$;
    \item $M(x, y, n) \coloneq \inf_{k \leqslant n} (x(k) + y(k))$.
\end{itemize}
Also let $m(x, y, \omega) \coloneq \sup_{n \in \omega} m(x, y, n)$ and $M(x, y, \omega) \coloneq \inf_{n \in \omega} M(x, y, n)$. 

\begin{prop}\label{prop:PropertiesMinMax}
Let $x, y, z \in \mathcal{U}$ and $n \in \omega \cup \{\omega\}$.
\begin{enumerate}[(1)]
\item The map $m(x, y, \cdot) \colon \omega \cup \{\omega\} \to [0, 1]$ is nondecreasing.
\item The map $M(x, y, \cdot) \colon \omega \cup \{\omega\} \to [0, 2]$ is nonincreasing.
\item $m(x, y, 0) \leqslant M(x, y, 0)$.
\item $M(x, y, \omega) \leqslant m(x, y, \omega)$.
\item $m(x, z, n) \leqslant m(x, y, n) + m(y, z, n)$.
\item $M(x, z, n) \leqslant M(x, y, n) + m(y, z, n)$.
\end{enumerate}
\end{prop}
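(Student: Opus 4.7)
Most of the items are routine manipulations with $\sup$ and $\inf$, so I would dispatch them quickly and save real work only for item~(4).

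For (1) and (2), I would note that as $n$ grows the indexing set $\{k : k \leqslant n\}$ grows, so a supremum can only increase and an infimum can only decrease; the case $n = \omega$ is handled by the definitions of $m(x,y,\omega)$ and $M(x,y,\omega)$ (a supremum of a supremum is the supremum over the union, and similarly for infima). For (3), at $n = 0$ the two quantities are $|x(0) - y(0)|$ and $x(0) + y(0)$, and since $x(0), y(0) \in [0,1]$ are nonnegative one has $|x(0) - y(0)| \leqslant x(0) + y(0)$ immediately.

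For (5) and (6), I would reduce everything to the pointwise triangle inequality $|x(k) - z(k)| \leqslant |x(k) - y(k)| + |y(k) - z(k)|$. For (5), taking the supremum over $k \leqslant n$ gives the result. For (6), for every $k \leqslant n$ one has
\[
x(k) + z(k) \;=\; (x(k) + y(k)) + (z(k) - y(k)) \;\leqslant\; (x(k) + y(k)) + m(y,z,n),
\]
and taking the infimum over $k \leqslant n$ on the left yields $M(x,z,n) \leqslant M(x,y,n) + m(y,z,n)$.

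The only step that genuinely uses the structure of $\mathcal{U}$ (namely $\liminf x = 0$) is item~(4). Here is the plan: let $a := m(x,y,\omega)$ and fix $\varepsilon > 0$. Since $\liminf x = 0$, there exists $k_0 \in \omega$ with $x(k_0) < \varepsilon/2$. By definition of $a$, $|x(k_0) - y(k_0)| \leqslant a$, so $y(k_0) \leqslant x(k_0) + a < \varepsilon/2 + a$. Therefore
\[
M(x,y,\omega) \;\leqslant\; x(k_0) + y(k_0) \;<\; a + \varepsilon \;=\; m(x,y,\omega) + \varepsilon.
\]
Since $\varepsilon$ was arbitrary, $M(x,y,\omega) \leqslant m(x,y,\omega)$. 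The key trick---and the only nontrivial point of the whole proposition---is that the liminf assumption on $x$ converts a bound on the symmetric differences $|x(k) - y(k)|$ into a bound on a single sum $x(k_0) + y(k_0)$; note that symmetry in $x$ and $y$ is not needed, so invoking $\liminf y = 0$ separately is unnecessary.
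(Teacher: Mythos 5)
Your proposal is correct and follows essentially the same route as the paper: items (1)--(3), (5), (6) by the same elementary sup/inf manipulations, and item (4) by the same trick of using $\liminf x = 0$ to pick a coordinate where $x$ is small and comparing $x(k)+y(k)$ with $|x(k)-y(k)|$ up to $\varepsilon$. Nothing is missing.
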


\begin{proof}
(1), (2) and (3) are immediate. (5) is obtained from the inequality $|x(k) - z(k)| \leqslant |x(k) - y(k)| + |y(k) - z(k)|$ by passing to the supremum on both sides. (6) is obtained from the inequality $x(k) + z(k) \leqslant (x(k) + y(k)) + |y(k) - z(k)|$ by first passing $|y(k) - z(k)|$ to the supremum, and then passing both sides to the infimum.

\smallskip

We now prove (4). Fix $\varepsilon > 0$. Since $\liminf x = 0$, we can find $k \in \omega$ such that $x(k) \leqslant \varepsilon$. We then have $M(x, y, \omega) \leqslant x(k) + y(k) \leqslant y(k) + \varepsilon$, and $m(x, y, \omega) \geqslant |y(k) - x(k)| \geqslant y(k) - \varepsilon$. Thus, $M(x, y, \omega) \leqslant m(x, y, \omega) + 2\varepsilon$, and making $\varepsilon$ tend to $0$, we get the desired inequality.
\end{proof}

\begin{prop}\label{prop:AtLeastOneConstant}
    Let $x, y \in \mathcal{U}$ and $n \in \omega$. Then at least one of the following conditions is satisfied:
    \begin{enumerate}[(1)]
    \item $m(x, y, n) = m(x, y, n+1)$;
    \item $M(x, y, n) = M(x, y, n+1)$;
    \item $m(x, y, n+1) \leqslant M(x, y, n+1)$.
    \end{enumerate}
\end{prop}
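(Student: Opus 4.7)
The plan is to assume that conditions (1) and (2) both fail and to derive condition (3) directly from the definitions.

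First, I would unpack what the failure of (1) and (2) means, using the monotonicity statements of Proposition \ref{prop:PropertiesMinMax}. By part (1) of that proposition, $m(x,y,\cdot)$ is nondecreasing, so if (1) fails then $m(x,y,n+1) > m(x,y,n)$, meaning the supremum defining $m(x,y,n+1)$ is attained (strictly) at the new index, giving $m(x,y,n+1) = |x(n+1) - y(n+1)|$. Symmetrically, by part (2), $M(x,y,\cdot)$ is nonincreasing, so if (2) fails then $M(x,y,n+1) < M(x,y,n)$, forcing $M(x,y,n+1) = x(n+1) + y(n+1)$.

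Once both values are pinned to the single coordinate $n+1$, inequality (3) reduces to the elementary fact that for nonnegative reals $a = x(n+1)$ and $b = y(n+1)$, one has $|a - b| \leqslant a + b$, which is obvious (split on the sign of $a-b$). This gives exactly $m(x,y,n+1) \leqslant M(x,y,n+1)$, i.e., condition (3).

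I do not anticipate a real obstacle here: the whole point of the statement is that the sequences $m(x,y,\cdot)$ and $M(x,y,\cdot)$ can only strictly change when the defining extremum is witnessed by the newest coordinate, and at that one coordinate $|a-b| \leq a+b$ holds trivially because the entries of elements of $\mathcal{U}$ live in $[0,1]$. The proof is essentially a three-line case analysis; the care is only in recording that the supremum and infimum are indeed attained at index $n+1$ under the negated hypotheses, which is immediate from monotonicity since we are taking sup/inf over the finite set $\{0,\ldots,n+1\}$.
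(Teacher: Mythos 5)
Your argument is correct and is essentially identical to the paper's proof: negate (1) and (2), note that the strict change forces $m(x,y,n+1)=|x(n+1)-y(n+1)|$ and $M(x,y,n+1)=x(n+1)+y(n+1)$, and conclude via $|a-b|\leqslant a+b$ for nonnegative entries. No gaps.
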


\begin{proof}
    Suppose neither (1) nor (2) are satisfied. Condition (1) not being satisfied implies that $m(x, y, n+1) = |x(n+1) - y(n+1)|$. Condition (2) not being satisfied implies that $M(x, y, n+1) = x(n+1) + y(n+1)$. Since $|x(n+1) - y(n+1)| \leqslant x(n+1) + y(n+1)$, condition (3) follows.
\end{proof}

We now define a map $d \colon \mathcal{U}^2 \to [0, 1]$ as follows. Consider $x, y \in \mathcal{U}$. Let $n_{x, y} \coloneq \min\{n \in \omega \cup \{\omega\} \mid M(x, y, n) \leqslant m(x, y, n)\}$, which exists by condition (4) in Proposition \ref{prop:PropertiesMinMax}. Define $d(x, y)$ as follows.
\begin{itemize}
    \item If $M(x, y, n_{x, y}) = m(x, y, n_{x, y})$, then $d(x, y)$ is this common value.
    \item Otherwise, by condition (3) in Proposition \ref{prop:PropertiesMinMax}, we can write $n_{x, y} = k + 1$, with $k \in \omega$. By Proposition \ref{prop:AtLeastOneConstant}, either $m(x, y, k) = m(x, y, k+1)$, or $M(x, y, k) = M(x, y, k+1)$ (and both cases cannot occur simultaneously, otherwise it would violate the minimality of $n_{x, y}$). In the first case, let $d(x, y) \coloneq m(x, y, k)$, and in the second case, let $d(x, y) \coloneq M(x, y, k)$.
\end{itemize}
Observe that, in any case, we have $d(x, y) \leqslant m(x, y, n_{x, y})$. This inequality will often be used in our later arguments.

\begin{prop}
    The map $d$ is a pseudometric on $\mathcal{U}$.
\end{prop}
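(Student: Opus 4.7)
The plan is to verify the four pseudometric axioms, leaving the triangle inequality for last. Non-negativity is clear from $m, M \geqslant 0$. Symmetry is immediate because $m(x,y,n)$ and $M(x,y,n)$ are symmetric in $x,y$, so $n_{x,y} = n_{y,x}$ and the three definitional clauses coincide. For $d(x,x) = 0$, observe that $m(x,x,\cdot) \equiv 0$ while $M(x,x,n) = 2\min_{k \leqslant n} x(k)$ decreases to $2\liminf x = 0$ because $x \in \mathcal{U}$; hence $n_{x,x}$ is the first index (possibly $\omega$) where $M(x,x,\cdot)$ vanishes, and we fall into the first clause of the definition with value $0$.

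The substance is the triangle inequality $d(x,z) \leqslant d(x,y) + d(y,z)$. The key device is to extract, for each pair $(x,y)$, a single witness index $\hat{n}_{x,y} \in \omega \cup \{\omega\}$ satisfying
$$\max\bigl(m(x,y,\hat{n}_{x,y}),\, M(x,y,\hat{n}_{x,y})\bigr) \leqslant d(x,y).$$
A brief case analysis on the three clauses defining $d(x,y)$ does the job: I will take $\hat{n}_{x,y} \coloneq n_{x,y}$ in the first two clauses (the equality case and the $m$-constant case), where $M(x,y,n_{x,y}) \leqslant m(x,y,n_{x,y}) = d(x,y)$ by the very choice of $n_{x,y}$; and $\hat{n}_{x,y} \coloneq n_{x,y} - 1$ in the third clause (the $M$-constant case), where $m(x,y,n_{x,y}-1) < M(x,y,n_{x,y}-1) = d(x,y)$ by the minimality of $n_{x,y}$.

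Given $x, y, z$, I assume by symmetry that $\hat{n}_{x,y} \leqslant \hat{n}_{y,z}$ and set $n \coloneq \hat{n}_{x,y}$. Monotonicity of $m$ (Proposition \ref{prop:PropertiesMinMax}(1)) gives $m(y,z,n) \leqslant m(y,z,\hat{n}_{y,z}) \leqslant d(y,z)$, while $m(x,y,n)$ and $M(x,y,n)$ are both at most $d(x,y)$ by construction. Plugging into parts (5) and (6) of Proposition \ref{prop:PropertiesMinMax} yields
$$m(x,z,n),\ M(x,z,n) \leqslant d(x,y) + d(y,z).$$
It remains to observe $d(x,z) \leqslant \max(m(x,z,n), M(x,z,n))$ for every $n$: for $n \geqslant n_{x,z}$ this follows from the inequality $d(x,z) \leqslant m(x,z,n_{x,z})$ noted right after the definition, combined with the monotonicity of $m$; for $n < n_{x,z}$ one uses the companion bound $d(x,z) \leqslant M(x,z,n)$, which comes out of the same three-case split on $d(x,z)$ together with the nonincrease of $M$. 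The main obstacle is purely the bookkeeping of the three definitional clauses across all three pairs at once; once the witness $\hat{n}_{x,y}$ is pinned down, properties (5), (6), and monotonicity do all the remaining work.
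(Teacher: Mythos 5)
Your proof is correct, and it takes a genuinely different route from the one in the paper for the only nontrivial point, the triangle inequality. The paper first extends $m(x,y,\cdot)$ and $M(x,y,\cdot)$ affinely to real arguments and uses the intermediate value theorem to produce an exact crossing point $t_{x,y}$ with $m(x,y,t_{x,y})=M(x,y,t_{x,y})=d(x,y)$ (Claim \ref{claim:PseudoMetric}); it then concludes via a three-case analysis on the position of $t_{x,z}$ relative to $t_{x,y}\leqslant t_{y,z}$, each case invoking monotonicity together with items (5)--(6) of Proposition \ref{prop:PropertiesMinMax}. You stay entirely discrete: your witness index $\hat n_{x,y}\in\omega\cup\{\omega\}$ (which is well defined, since the second and third clauses of the definition of $d$ only occur for finite $n_{x,y}\geqslant 1$, so $n_{x,y}-1$ makes sense) satisfies $\max(m,M)\leqslant d(x,y)$ at $\hat n_{x,y}$, and the complementary observation that $d(x,z)\leqslant m(x,z,n)$ for $n\geqslant n_{x,z}$ while $d(x,z)\leqslant M(x,z,n)$ for $n<n_{x,z}$ gives $d(x,z)\leqslant\max\bigl(m(x,z,n),M(x,z,n)\bigr)$ for \emph{every} $n$; this removes any need to locate the crossing index of the pair $(x,z)$, so a single application of (5) and (6) at $n=\hat n_{x,y}$ finishes the argument, with only the harmless WLOG $\hat n_{x,y}\leqslant\hat n_{y,z}$. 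What the paper's interpolation buys is a clean symmetric statement (one exact point where $m=M=d$ for each pair) at the cost of the real-variable detour and a three-case split; what your version buys is a shorter, purely combinatorial argument whose only bookkeeping is the three-clause check of the two auxiliary bounds. Minor quibbles only: in your verification of $d(x,x)=0$, $M(x,x,n)$ decreases to $2\inf_k x(k)$ rather than $2\liminf x$ (both are $0$ here since $x\geqslant 0$ and $\liminf x=0$, so nothing breaks, and the paper's shortcut $d(x,x)\leqslant m(x,x,n_{x,x})=0$ is even quicker); and in the second sub-case of the bound $d(x,z)\leqslant M(x,z,n)$ for $n<n_{x,z}$ one should argue via $M(x,z,n)\geqslant M(x,z,n_{x,z}-1)>m(x,z,n_{x,z}-1)=d(x,z)$, which is what your appeal to minimality of $n_{x,z}$ plus the nonincrease of $M$ amounts to.
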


\begin{proof}
    For all $x, y \in \mathcal{U}$ and $n \in \omega \cup \{\omega\}$, we have $m(x, y, n) = m(y, x, n)$ and $M(x, y, n) = M(y, x, n)$, from which it follows that $d(x, y) = d(y, x)$. We also have, for all $x \in \mathcal{U}$, $d(x, x) \leqslant m(x, x, n_{x, x}) = 0$, thus $d(x, x) = 0$. Now it remains the verify that the triangle inequality holds.

    \smallskip

    We start with extending the maps $M$ and $m$ to real values of $n$. Although this step is not formally required, it makes our argument easier to follow. For this, given $x, y \in \mathcal{U}$, we extend the maps $m(x, y, \cdot)$ and $M(x, y, \cdot)$ to $\R_+ \cup \{\omega\}$ so that they are affine on intervals of the form $[n, n+1]$, $n \in \omega$ (here, $\R_+$ denotes the interval $[0, +\infty[$). Note that those extensions are still respectively nondecreasing and nonincreasing, and inequalities (5) and (6) from Proposition \ref{prop:PropertiesMinMax} can be immediately extended to real values, that is, for all $x, y, z \in \mathcal{U}$ and $t \in \R_+ \cup\{\omega\}$, we have
    \begin{equation}
        m(x, z, t) \leqslant m(x, y, t) + m(y, z, t)\label{Ineq:01}
    \end{equation}
    and
    \begin{equation}
        M(x, z, t) \leqslant M(x, y, t) + m(y, z, t).\label{Ineq:02}
    \end{equation}

    \begin{claim}\label{claim:PseudoMetric}
    For all $x, y \in \mathcal{U}$, there exists $t_{x, y} \in \R_+ \cup \{\omega\}$ such that $m(x, y, t_{x, y}) = M(x, y, t_{x, y}) = d(x, y)$.
    \end{claim}
    \begin{proof}
        If $M(x, y, n_{x, y}) = m(x, y, n_{x, y})$, then we can just take $t_{x, y} \coloneq n_{x, y}$. Otherwise, $n_{x, y} = k+1$ for some $k \in \omega$, and one of the following two conditions hold: either
        $$m(x, y, k) < M(x, y, k) = d(x, y) = M(x, y, k+1) < m(x, y, k+1)$$
        or
        $$M(x, y, k) > m(x, y, k) = d(x, y) = m(x, y, k+1) > M(x, y, k+1).$$
        Both cases can be handled similarly, so suppose we are in the first case. In this case, the map $m(x, y, \cdot)$ is affine on the interval $[k, k+1]$ and the map $M(x, y, \cdot)$ is constant with value $d(x, y)$ on the same interval, so a suitable $t_{x, y}$ can be found in this interval by the intermediate value theorem.
    \end{proof}
    Now, fix $x, y, z \in \mathcal{U}$ and prove that $d(x, z) \leqslant d(x, y) + d(y, z)$. Choose $t_{x, y}, t_{x, z}, t_{y, z} \in \R_+ \cup \{\omega\}$ as given by Claim \ref{claim:PseudoMetric}. By symmetry, we can assume that $t_{x, y} \leqslant t_{y, z}$. We distinguish between three cases.
    \begin{itemize}
        \item If $t_{x, z} \leqslant t_{x, y} \leqslant t_{y, z}$, then using Inequality \ref{Ineq:01} and the monotonicity of the maps $m(x, y, \cdot)$ and $m(y, z, \cdot)$, we get:
        \begin{align*}
           d(x, z) &= m(x, z, t_{x, z}) \\
           &\leqslant m(x, y, t_{x, z}) + m(y, z, t_{x, z})\\
           &\leqslant m(x, y, t_{x, y}) + m(y, z, t_{y, z})\\
           &= d(x, y) + d(y, z).
        \end{align*}
        \item If $t_{x, y} \leqslant t_{x, z} \leqslant t_{y, z}$, then using Inequality \ref{Ineq:02} and the monotonicity of the maps $M(x, y, \cdot)$ and $m(y, z, \cdot)$, we get:
        \begin{align*}
           d(x, z) &= M(x, z, t_{x, z}) \\
           &\leqslant M(x, y, t_{x, z}) + m(y, z, t_{x, z})\\
           &\leqslant M(x, y, t_{x, y}) + m(y, z, t_{y, z})\\
           &= d(x, y) + d(y, z).
        \end{align*}
        \item If $t_{x, y} \leqslant t_{y, z} \leqslant t_{x, z}$, then using Inequality \ref{Ineq:02} and the monotonicity of the maps $M(x, z, \cdot)$ and $M(x, y, \cdot)$, we get:
        \begin{align*}
           d(x, z) &= M(x, z, t_{x, z}) \\
           &\leqslant M(x, z, t_{y, z}) \\
           &\leqslant M(x, y, t_{y, z}) + m(y, z, t_{y, z})\\
           &\leqslant M(x, y, t_{x, y}) + m(y, z, t_{y, z})\\
           &= d(x, y) + d(y, z).
        \end{align*}
    \end{itemize}
\end{proof}

\begin{prop}
    $\mathcal{U} \in \mathfrak{S}_1$.
\end{prop}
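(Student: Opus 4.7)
The plan is to verify the two defining properties of $\mathfrak{S}_1$ for $\mathcal{U}$: that $d$ has diameter at most $1$, and that $(\mathcal{U}, d)$ is separable (identifying $\mathcal{U}$ with its metric quotient under the convention already laid out above).

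The diameter bound is immediate from the inequality $d(x, y) \leqslant m(x, y, n_{x, y})$ noted just after the definition of $d$: since $x(k), y(k) \in [0, 1]$ gives $|x(k) - y(k)| \leqslant 1$ for every $k$, we have $m(x, y, n) \leqslant 1$ for all $n \in \omega \cup \{\omega\}$, and hence $d(x, y) \leqslant 1$.

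For separability, I propose the countable subset
\[
D \coloneqq \{z \in (\mathbb{Q} \cap [0, 1])^{\omega} : z(k) = 0 \text{ for all but finitely many } k\},
\]
which is contained in $\mathcal{U}$ since any eventually-zero sequence has $\liminf = 0$. To show $D$ is dense, I would fix $x \in \mathcal{U}$ and $\varepsilon > 0$; using $\liminf x = 0$, pick some $j \in \omega$ with $x(j) \leqslant \varepsilon$; for each $k < j$, choose $q_k \in \mathbb{Q} \cap [0, 1]$ with $|q_k - x(k)| \leqslant \varepsilon$; and define $z \in D$ by $z(k) \coloneqq q_k$ for $k < j$ and $z(k) \coloneqq 0$ for $k \geqslant j$.

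The estimate $d(x, z) \leqslant \varepsilon$ then follows from a direct computation at level $n = j$: we have $m(x, z, j) \leqslant \varepsilon$ (since $|x(k) - z(k)| \leqslant \varepsilon$ for $k < j$ by construction, and $|x(j) - z(j)| = x(j) \leqslant \varepsilon$), and $M(x, z, j) \leqslant x(j) + z(j) = x(j) = |x(j) - z(j)| \leqslant m(x, z, j)$. Thus $n_{x, z} \leqslant j$, and so $d(x, z) \leqslant m(x, z, n_{x, z}) \leqslant m(x, z, j) \leqslant \varepsilon$. I do not foresee any serious obstacle; the only subtle point is choosing the cut-off at an index $j$ where $x(j)$ itself is small, which is precisely what forces the crossing of $M$ and $m$ to occur no later than level $j$, and this is exactly what the $\liminf$ hypothesis defining $\mathcal{U}$ supplies.
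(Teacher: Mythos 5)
Your proposal is correct and follows essentially the same route as the paper: the diameter bound via $d(x,y)\leqslant m(x,y,n_{x,y})\leqslant 1$, and density of the rational eventually-zero sequences, cutting off at an index where $x$ is at most $\varepsilon$ so that $M$ drops below $m$ by level $j$ and hence $d(x,z)\leqslant m(x,z,j)\leqslant\varepsilon$. No gaps.
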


\begin{proof}
    For all $x, y \in \mathcal{U}$, we have $d(x, y) \leqslant m(x, y, n_{x, y}) \leqslant 1$, thus $\diam(\mathcal{U}) \leqslant 1$. We now need to prove that $\mathcal{U}$ is separable. For this, let $A \coloneq \{x \in \mathcal{U} \mid (\forall n \in \omega)(x(n) \in \mathbb{Q}) \wedge (\exists n \in \omega)(\forall m \geqslant n)(x(m) = 0)\}$ and prove that $A$ is dense in $\mathcal{U}$. Fix $x \in \mathcal{U}$ and $\varepsilon > 0$. Let $n \in \omega$ be such that $x(n) \leqslant \varepsilon$. Let $y \in A$ be such that for all $k < n$, $|x(k) - y(k)| \leqslant \varepsilon$, and for all $k \geqslant n$, $y(k) = 0$. Using that $y(n) = 0$, we get that $M(x, y, n) \leqslant x(n) \leqslant m(x, y, n)$, thus $n_{x, y} \leqslant n$. We deduce that $d(x, y) \leqslant m(x, y, n) \leqslant \varepsilon$.
\end{proof}

Denote by $\CS$ the monoid of rigid surjections $\omega \to \omega$. From now on, we consider the right action $\mathcal{U} \curvearrowleft \CS$ by composition.

\begin{prop}
    The monoid $\CS$ acts by isometries on $\mathcal{U}$.
\end{prop}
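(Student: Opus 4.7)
The plan is to verify two things: first, that the action is well-defined (i.e.\ $x \circ p \in \mathcal{U}$ whenever $x \in \mathcal{U}$ and $p \in \CS$), and second, that $d(x \circ p, y \circ p) = d(x, y)$ for all $x, y \in \mathcal{U}$ and $p \in \CS$. Well-definedness is immediate from surjectivity of $p$: picking $n_k \in \omega$ with $x(n_k) \to 0$ and any $j_k \in p^{-1}(\{n_k\})$, one has $x \circ p(j_k) = x(n_k) \to 0$, so $\liminf(x \circ p) = 0$.

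For the isometric part, the key preliminary observation is a structural fact about how rigid surjections act on the quantities $m$ and $M$. For $p \in \CS$ and $n \in \omega$, set $M_n \coloneq \max_{k \leqslant n} p(k)$. Using the characterization of rigidity recalled in the paper (for $i < j$, $\min p^{-1}(\{i\}) < \min p^{-1}(\{j\})$), one checks: (i) $p(\{0, \ldots, n\}) = \{0, \ldots, M_n\}$, because any $v \leqslant M_n$ has its first occurrence no later than that of $M_n$, hence at a position $\leqslant n$; (ii) $M_0 = p(0) = 0$; and (iii) $M_{n+1} \in \{M_n, M_n + 1\}$, since if a new value appears at position $n+1$ it must be the smallest not yet seen. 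From (i) we immediately obtain
$$m(x \circ p, y \circ p, n) = m(x, y, M_n) \quad \text{and} \quad M(x \circ p, y \circ p, n) = M(x, y, M_n),$$
and since $M_n \to \infty$ these identities pass to $n = \omega$.

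Given these identities, the equality $d(x \circ p, y \circ p) = d(x, y)$ reduces to tracking the definition of $d$ through the substitution $n \mapsto M_n$. If $n_{x, y} = \omega$, the limit identities directly give $n_{x \circ p, y \circ p} = \omega$ and $d(x \circ p, y \circ p) = d(x, y)$. If $n_{x, y} = N \in \omega$, let $n^*$ be the least $n$ with $M_n = N$; property (iii) forces $M_{n^* - 1} = N - 1$ when $N \geqslant 1$, and one then checks that $n_{x \circ p, y \circ p} = n^*$ and that whichever branch of the definition of $d$ is used at $(x, y)$ (the ``$m = M$'' branch at $N$, or one of the two branches coming from Proposition \ref{prop:AtLeastOneConstant} when $N = k+1$) is also the branch used at $(x \circ p, y \circ p)$, yielding the same numerical value. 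The only real obstacle is keeping the case analysis straight, but since every sub-branch is simply a substitution via $m(x \circ p, y \circ p, n) = m(x, y, M_n)$ and its $M$-counterpart, no new combinatorial difficulty arises.
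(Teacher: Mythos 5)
Your proof is correct and follows essentially the same route as the paper: your identity $m(x\circ p,y\circ p,n)=m(x,y,M_n)$ (with $M_n=\max_{k\leqslant n}p(k)$) is exactly the paper's key observation, stated there via the inverse map $h(k)=\min p^{-1}(\{k\})$, and your identification $n_{x\circ p,y\circ p}=n^*$ with $M_{n^*-1}=N-1$ followed by branch-by-branch substitution matches the paper's case analysis. The only addition is your (correct) check that $x\circ p\in\mathcal{U}$, which the paper leaves implicit.
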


\begin{proof}
    Let $p \in \CS$ and $x, y \in \mathcal{U}$. Define an increasing map $h \colon \omega \cup \{\omega\} \to \omega \cup\{\omega\}$ by $h(k) = \min\{n \in \omega \mid p(n) = k\}$ for $k \in \omega$, and $h(\omega) = \omega$. Observe that for all $n, k \in \omega$ such that  $h(k) \leqslant n < h(k+1)$, or for $n = k = \omega$, we have
    \begin{equation}
    m(x \circ p, y\circ p, n) = m(x, y, k)\text{ and }M(x \circ p, y\circ p, n) = M(x, y, k).\label{Eq:025}
    \end{equation}
    Thus $n_{x \circ p, y \circ p} = h(n_{x, y})$, $m(x \circ p, y \circ p, n_{x \circ p, y\circ p}) = m(x, y, n_{n, y})$, and $M(x \circ p, y \circ p, n_{x \circ p, y\circ p}) = M(x, y, n_{n, y})$. We now treat separately the three cases in the definition of $d(x, y)$.

    \begin{description}
        \item[1\textsuperscript{st} case:] $d(x, y) = M(x, y, n_{x, y}) = m(x, y, n_{x, y})$. In this case, $M(x \circ p, y \circ p, n_{x \circ p, y\circ p})  = m(x \circ p, y \circ p, n_{x \circ p, y\circ p})$, so $d(x \circ p, y\circ p) = m(x \circ p, y \circ p, n_{x \circ p, y\circ p}) = m(x, y, n_{n, y}) = d(x, y)$.

        \item[2\textsuperscript{nd} case:] $M(x, y, n_{x, y}) < m(x, y, n_{x, y})$, $n_{x, y} = k+1$ for some $k \in \omega$, and $d(x, y) = m(x, y, k) = m(x, y, k+1)$. Then we have $M(x \circ p, y \circ p, n_{x\circ p, y\circ p}) < m(x \circ p, y \circ p, n_{x\circ p, y\circ p})$. Moreover, writing $n_{x \circ p, y \circ p} = l+1$ for some $l \in \omega$, we have $h(k) \leqslant l < h(k+1)$, so by Condition \ref{Eq:025}, we have $m(x \circ p, y\circ p, l) = m(x, y, k)$. Thus, $m(x \circ p, y\circ p, l) = m(x, y, k+1) = m(x \circ p, y\circ p, l+1)$, so $d(x \circ p, y\circ p) = m(x \circ p, y\circ p, l)$. Hence,
        $$d(x \circ p, y\circ p) = m(x \circ p, y\circ p, l) = m(x, y, k) = d(x, y).$$

        \item[3\textsuperscript{rd} case:] $M(x, y, n_{x, y}) < m(x, y, n_{x, y})$, $n_{x, y} = k+1$ for some $k \in \omega$, and $d(x, y) = M(x, y, k) = M(x, y, k+1)$. This case is treated similarly as the second case.
    \end{description}%
\end{proof}

For $r \geqslant 1$, define $w_r \in \mathcal{U}$ as follows: for $n \leqslant r$, $w_r(n) \coloneq \frac{r-n}{r}$, and for $n > r$, $w_r(n) \coloneq 0$. Let $\mathcal{U}_r \coloneq (w_r \circ \CS)_{\frac{1}{2r}}$. We see $\mathcal{U}_r$ as a metric subspace of $\mathcal{U}$.

\begin{prop}
    For every $r \geqslant 1$, the metric space $\mathcal{U}_r$ is $\mathfrak{S}_1$-universal.
\end{prop}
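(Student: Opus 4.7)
The plan is to isometrically embed every $Y \in \mathfrak{S}_1$ into $\mathcal{U}_r$, thereby establishing $\mathfrak{S}_1$-universality. Reducing to the case where $Y$ is countable (by passing to a dense subset and later extending by continuity), and using that the Urysohn sphere $\mathcal{S}$ is known to be $\mathfrak{S}_1$-universal and ultrahomogeneous, we may assume $Y \subseteq \mathcal{S}$ has access to any needed auxiliary reference points at prescribed distances. Enumerate a sequence $(y_n)_{n \in \omega}$ dense in $Y$ with every point appearing infinitely often (so that $\liminf_n d(y, y_n) = 0$ for every $y \in Y$), and define the Kuratowski-type map $f \colon Y \to \mathcal{U}$ by $f(y)(0) = 1$ and $f(y)(n) = d(y, y_{n-1})$ for $n \geqslant 1$. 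The prepended value $1$ prevents the image from collapsing into the degenerate equivalence class of sequences starting with $0$ under the pseudometric $d$.

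I would then verify that $f$ is an isometric embedding $(Y, d) \to (\mathcal{U}, d)$ by showing that both $m(f(y), f(y'), \omega)$ and $M(f(y), f(y'), \omega)$ equal $d(y, y')$. The supremum defining $m$ is realized by taking indices $k$ with $y_k$ close to $y$, which yields $|d(y, y_k) - d(y', y_k)| \to d(y, y')$; the infimum defining $M$ is approached similarly, and the prepended initial sum of $2$ does not cap the infimum below $d(y, y') \leqslant 1$. A case analysis in the definition of $d$ at the crossing point $n_{f(y), f(y')}$ between $m$ and $M$ then yields $d(f(y), f(y')) = d(y, y')$, which mimics the standard Kuratowski argument adapted to the present pseudometric.

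The main step is to produce, for each $y$, a witness $v_y \in w_r \circ \CS$ with $d(f(y), v_y) \leqslant 1/(2r)$. The natural candidate is to round each tail coordinate of $f(y)$ to the nearest multiple of $1/r$: set $v_y(0) = 1$ and $v_y(n) = g(d(y, y_{n-1}))$ for $n \geqslant 1$, where $g$ denotes nearest-multiple-of-$1/r$ rounding. This gives $\|f(y) - v_y\|_\infty \leqslant 1/(2r)$, hence $d(f(y), v_y) \leqslant 1/(2r)$. The subtlety is ensuring $v_y \in w_r \circ \CS$: its values must appear in the first-appearance order $(1, (r-1)/r, (r-2)/r, \ldots, 1/r, 0)$, which constrains the sequence $(g(d(y, y_n)))_n$ for every $y$. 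One approach is to prepend the dense enumeration with reference points drawn from $\mathcal{S}$ at distances matching the template values of $w_r$, guaranteeing that the correct first-appearance structure is realized at the start of $v_y$. I expect the main obstacle to be reconciling two competing requirements: a reference point at distance $1/r$ from every $y \in Y$ (needed to force the first appearance of the value $1/r$ in $v_y$) caps $M(f(y), f(y'), \cdot)$ at $2/r$, which breaks isometry when $d(y, y') > 2/r$. Resolving this likely requires either a more sophisticated placement that interleaves reference coordinates with Kuratowski ones at carefully chosen positions, or allowing $v_y$ to differ from $f(y)$ substantially at a few early positions while exploiting that in the definition of $d$ the contribution of the inf-sum term $M(f(y), v_y, \cdot)$ can compensate for large sup-norm deviations, keeping $d(f(y), v_y)$ within $1/(2r)$.
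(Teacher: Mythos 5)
Your overall skeleton (a Kuratowski-type embedding $f(y)(n)=d(y,y_n)$ anchored at a distance-$1$ point, then approximating $f(y)$ by some $w_r\circ p$ and using $d\leqslant\|\cdot\|_\infty$) is the same as the paper's, and your isometry verification via $m(f(y),f(y'),\omega)=M(f(y),f(y'),\omega)=d(y,y')$ is correct. But the main step --- producing, for each $y$, a rigid surjection $p$ with $d(f(y),w_r\circ p)\leqslant\frac{1}{2r}$ --- is left genuinely open, and neither of your proposed fixes closes it. Coordinatewise rounding fails for exactly the reason you identify: nothing forces the rounded values to make their first appearances in the staircase order $1,\frac{r-1}{r},\ldots,\frac{1}{r},0$ required of $w_r\circ p$ with $p$ rigid. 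Your first remedy (auxiliary points at distance $\frac{j}{r}$ from every $y$) is impossible isometrically, as you note yourself. Your second remedy (tolerating large early deviations and hoping the $M$-term compensates) cannot work in general either: one checks from the definition of $d$ that any deviation at a coordinate occurring strictly before the crossing point $n_{f(y),w_r\circ p}$ is a lower bound for $d(f(y),w_r\circ p)$, so $w_r\circ p$ must track $f(y)$ to within $\frac{1}{2r}$ on the entire initial segment up to the crossing; and since $w_r\circ p$ starts at $1$ and must descend through the whole staircase before it can become small, for an arbitrary dense enumeration there is no way to force the crossing to occur before a mismatch appears.

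The missing idea in the paper is to modify the space being embedded rather than the witness. Given $X\in\mathfrak{S}_1$, adjoin a point $*$ with $d(x,*)=1$, embed $X_*=X\sqcup\{*\}$ into $\mathcal{C}_b(X_*)$ by the Kuratowski map, and let $Y$ be the convex hull of the image; $Y$ is still in $\mathfrak{S}_1$, and by convexity one may choose the dense sequence $(y_n)$ with $y_0=*$ and $d(y_n,y_{n+1})\leqslant\frac{1}{r}$ for all $n$. Then every $f(x)$ satisfies $f(x)(0)=1$ and $|f(x)(n+1)-f(x)(n)|\leqslant\frac{1}{r}$, so rounding cannot skip grid values: defining $p(n)$, for $n$ up to the first index $n_0$ with $f(x)(n_0)\leqslant\frac{1}{2r}$, as the unique $k\leqslant r$ with $|f(x)(n)-w_r(k)|\leqslant\frac{1}{2r}$ (half-open intervals) gives $p(0)=0$ and $|p(n+1)-p(n)|\leqslant 1$, hence the first appearances automatically occur in the correct order. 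The second half of the trick, which your second remedy gropes toward but does not pin down, is that agreement is only needed up to $n_0$: since $w_r(p(n_0))=0$ and $f(x)(n_0)\leqslant\frac{1}{2r}$, one has $n_{f(x),w_r\circ p}\leqslant n_0$ and therefore $d(f(x),w_r\circ p)\leqslant m(f(x),w_r\circ p,n_0)\leqslant\frac{1}{2r}$, no matter how $p$ is extended beyond $n_0$ to a rigid surjection $\omega\to\omega$. Without the convex-hull/$\frac1r$-chain step there is no control on the initial segment of $f(y)$, so as written your argument has a genuine gap at its central point.
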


\begin{proof}
    Fix $r \geqslant 1$. We already know that $\mathcal{U}_r$ is in $\mathfrak{S}_1$, since it is a subset of $\mathcal{U}$. We now fix $X \in \mathfrak{S}_1$ and show that $X$ isometrically embeds into $\mathcal{U}_r$.

    \smallskip
    
    First extend $X$ to a metric space $X_*:= X \sqcup \{*\}$, where for all $x \in X$, $d(x, *) = 1$. Recall that $X_*$ isometrically embeds into the Banach space $\mathcal{C}_b(X_*)$ of real-valued bounded continuous functions on $X_*$, endowed with the supremum norm: map $x \in X_*$ to the function $y \mapsto d(x, y)$. Identifing $X_*$ with its image by this embedding, we can assume that $X_*$ is a subset of a normed vector space. Denote by $Y$ the convex hull of $X_*$; the metric space $Y$ is itself in $\mathfrak{S}_1$.

    \smallskip

    Now fix a dense sequence $(y_n)_{n \in \omega}$ in $Y$, with $y_0 = *$. Using convexity, adding entries in the sequence if necessary, we can assume that for all $n \in \omega$, $d(y_n, y_{n +1}) \leqslant \frac{1}{r}$. We now define a map $f \colon X \to [0, 1]^\omega$ as follows: for $x \in X$ and $n \in \omega$, let $f(x)(n) \coloneq d(x, y_n)$. Since $(x_n)$ is dense in $Y$, it follows that $f$ actually takes values in $\mathcal{U}$. We now show that $f \colon X \to \mathcal{U}$ is an isometric embedding. Fix $x, x' \in X$. For every $n \in \omega$, we have
    \begin{multline}
    |f(x)(n) - f(x')(n)| = |d(x, y_n) - d(x', y_n)| \\ \leqslant d(x, x') \\ \leqslant d(x, y_n) + d(x', y_n) = f(x)(n) + f(x')(n)
    \end{multline}
    so passing to the supremum on the left hand side and to the infimum on the right hand side, we get that $m(f(x), f(x'), \omega) \leqslant d(x, x') \leqslant M(f(x), f(x'), \omega)$. From property (4) in Proposition \ref{prop:PropertiesMinMax}, it follows that those inequalities are equalities. Thus, by the definition of the metric on $\mathcal{U}$, we have $d(f(x), f(x')) = d(x, x')$.

    \smallskip

    It now remains to prove that $f(X) \subseteq \mathcal{U}_r$. For this, we fix $x \in X$, and we look for $p \in \CS$ such that $d(w_r \circ p, f(x)) \leqslant \frac{1}{2r}$. First observe that since $y_0 = *$ and for all $n \in \omega$, $d(x_n, x_{n+1}) \leqslant \frac{1}{r}$, we have
    \begin{equation}
        f(x)(0) = 1\text{ and for all }n \in \omega\text{, }|f(x)(n) - f(x)(n+1)| \leqslant \frac{1}{r}.\label{Eq:03}
    \end{equation}
    Fix $n_0 \in \omega$ such that $f(x)(n_0) \leqslant \frac{1}{2r}$. From Conditions \ref{Eq:03} it follows that $n_0 \geqslant r$. Now define $p \colon \omega \to \omega$ as follows:
    \begin{itemize}
        \item for $n \leqslant n_0$, let $p(n)$ be the unique integer $k \leqslant r$ such that $$f(x)(n) \in \left(\frac{2(r-k)-1}{2r}, \frac{2(r-k)+1}{2r}\right];$$
        \item for $n > n_0$, let $p(n) \coloneq n - n_0 + r$.
    \end{itemize}
Observe that the equality $p(n) = n - n_0 + r$ is still satisfied for $n = n_0$. It follows from Conditions \ref{Eq:03} that $p(0) = 0$ and for every $n \in \omega$, $|p(n+1) - p(n)| \leqslant 1$. Moreover, $p$ is unbounded, so $p$ is a rigid surjection. Now observe that for all $n \leqslant n_0$, we have $|f(x)(n) - w_r(p(n))| \leqslant \frac{1}{2r}$, thus $m(f(x), w_r\circ p, n_0) \leqslant \frac{1}{2r}$. Since $w_r(p(n_0)) = 0$, we have $M(f(x), w_r\circ p, n_0) \leqslant f(x)(n_0) \leqslant m(f(x), w_r\circ p, n_0)$, thus $n_{f(x), w_r\circ p} \leqslant n_0$, hence $d(f(x), w_r\circ p) \leqslant m(f(x), w_r\circ p, n_0) \leqslant \frac{1}{2r}$.
\end{proof}

\begin{prop}\label{prop:UrysohnBorel}
For every $r \geqslant 1$, the map $\CS \to \mathcal{U}$, $p \mapsto w_r \circ p$ is continous.
\end{prop}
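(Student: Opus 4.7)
The plan is to prove something stronger than continuity, namely that the map $p \mapsto w_r \circ p$ is locally constant (once $\mathcal{U}$ is identified with its metric quotient by the relation $d(x,y)=0$). The whole argument will hinge on the single observation that $w_r(n) = 0$ for every $n > r$, combined with the fact that $\CS$ carries the product topology, whose basic neighborhoods of a point $p$ are determined by fixing a finite initial segment of $p$.

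Concretely, I would proceed as follows. Fix $p \in \CS$. Since $p$ is a rigid surjection $\omega \to \omega$, the value $r+1$ is eventually attained; let $n_0$ be the smallest index with $p(n_0) = r+1$. I then claim that for every $q \in \CS$ with $q\!\restriction_{n_0+1} = p\!\restriction_{n_0+1}$, one has $d(w_r \circ p, w_r \circ q) = 0$. For such a $q$, agreement of $p$ and $q$ on $\{0,\dots,n_0\}$ gives $(w_r \circ p)(n) = (w_r \circ q)(n)$ for all $n \leqslant n_0$, hence $m(w_r \circ p, w_r \circ q, n_0) = 0$. At the index $n_0$ itself, $p(n_0) = q(n_0) = r+1 > r$ forces $(w_r \circ p)(n_0) = (w_r \circ q)(n_0) = 0$, so $M(w_r \circ p, w_r \circ q, n_0) \leqslant 0$. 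Thus $n_{w_r \circ p, w_r \circ q} \leqslant n_0$, and the inequality $d(x,y) \leqslant m(x,y,n_{x,y})$ noted right after the definition of $d$, together with monotonicity of $m(\cdot,\cdot,\cdot)$ in its third argument, yields $d(w_r \circ p, w_r \circ q) \leqslant m(w_r \circ p, w_r \circ q, n_0) = 0$.

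Since the set $\{q \in \CS : q\!\restriction_{n_0+1} = p\!\restriction_{n_0+1}\}$ is an open neighborhood of $p$ in $\CS$ and its image under $p \mapsto w_r \circ p$ lies inside a single equivalence class modulo $d = 0$, the map is continuous at $p$; as $p \in \CS$ was arbitrary, this completes the proof.

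I do not anticipate any genuine obstacle here: the statement reduces to a direct unpacking of the definitions of $\CS$, $w_r$, and $d$. The only mildly delicate point is resisting the temptation to estimate coordinate-by-coordinate on the whole of $\omega$, and instead exploiting that a single index $n_0$ at which $w_r \circ p$ vanishes is enough to make the pseudometric $d$ collapse to $0$ via the $M$-term.
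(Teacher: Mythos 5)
Your proof is correct and is essentially the paper's own argument: both fix a finite initial segment of $p$ up to an index where $w_r\circ p$ vanishes (the paper uses an $n$ with $p(n)=r$, you use the first $n_0$ with $p(n_0)=r+1$, which makes no difference), and conclude that $m$ and $M$ both vanish at that index, so $d(w_r\circ p, w_r\circ q)=0$ on the corresponding basic neighborhood. Nothing to add.
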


\begin{proof}
    Let $p \in \CS$. Fix $n \in \omega$ such that $p(n) = r$, and let $V \coloneq \{q \in \CS \mid (\forall k \leqslant n)(q(k) = p(k))\}$, a neighborhood on $p$ in $\CS$. We prove that for all $q \in V$, $d(w_r \circ p, w_r \circ q) = 0$. Fix $q \in V$. For all $k \leqslant n$, we have $w_r(p(k)) = w_r(q(k))$, thus $m(w_r\circ p, w_r\circ q, n) = 0$. Moreover, $w_r(p(n)) = w_r(q(n)) = 0$, thus $M(w_r\circ p, w_r\circ q, n) = 0$. It follows that $d(w_r \circ p, w_r \circ q) = 0$.
\end{proof}

\begin{proof}[Proof of Theorem \ref{thm:NVTS}]
Let $K$ be a compact metric space, $f \colon \mathcal{S} \to K$ be uniformily continuous, and $\varepsilon > 0$. Since $\mathcal{S}$ is $\mathfrak{S}_1$-universal and $\mathcal{U} \in \mathfrak{S}_1$, one can assume that $\mathcal{S}$ is actually a superset of $\mathcal{U}$. Let $r \geqslant 1$ be such that for all $x, y \in \mathcal{S}$, $d(x, y) \leqslant \frac{1}{2r} \implies d(f(x), f(y)) \leqslant \frac{\varepsilon}{3}$. 
Let $(B_j)_{j < l}$ be a partition of $K$ in finitely many Borel pieces of diameter at most $\frac{\varepsilon}{3}$. Define $c \colon \CS \to l$ as follows: for $p \in \CS$, $c(p)$ is such that $f(w_r \circ p) \in B_{c(p)}$. It follows from Proposition \ref{prop:UrysohnBorel} that $c$ is a Borel map. Hence, by Carlson--Simpson's theorem, we can find $p \in \CS$ such that $c$ is constant on $CS \circ p$. Let $j$ denote this constant. We thus have $f(w_r\circ \CS \circ p) \subseteq B_j$. Since right-composition by $p$ is isometric, we have $\mathcal{U}_r \circ p \subseteq (w_r \circ \CS \circ p)_\frac{1}{2r}$. By the choice of $r$, we have $f(\mathcal{U}_r \circ p) \subseteq f((w_r \circ \CS \circ p)_\frac{1}{2r}) \subseteq (B_j)_{\frac{\varepsilon}{3}}$. It follows that $\osc(f\restriction_{\mathcal{U}_r \circ p}) \leqslant \diam((B_j)_{\frac{\varepsilon}{3}}) \leqslant \varepsilon$. Since $\mathcal{U}_r \circ p$ is isometric to $\mathcal{U}_r$, it is $\mathfrak{S}_1$-universal, so we can find an isometric copy $\mathcal{S}'$ of $\mathcal{S}$ in $\mathcal{U}_r \circ p$. This copy satisfies the desired condition.
\end{proof}

\bigskip\bigskip

\bibliographystyle{plain}
\bibliography{main}

\end{document}